\newtheorem{theorem}{Theorem}[section]
\newtheorem{proposition}[theorem]{Proposition}
\newtheorem{lemma}[theorem]{Lemma}
\newcommand{\R}{\mathbb R}
\newcommand{\eps}{\varepsilon}
\newcommand{\dd}{\, \mathrm{d}}
\newcommand{\mb}{\mathbf}
\DeclareMathOperator{\sech}{sech}
\DeclareMathOperator{\sgn}{sgn}
\numberwithin{equation}{section}
\title[Near-constant solutions]{Existence and stability of near-constant solutions 
of scalar field equations}
\author{Mashael Alammari}
\email{malammari2012@my.fit.edu}
\author{Stanley Snelson}
\email{ssnelson@fit.edu}
\keywords{Scalar-field equations, variable coefficients, asymptotic stability}
\subjclass[2020]{35L71, 35C07, 35B35}
\thanks{SS was partially supported by a Collaboration Grant from the Simons Foundation, Award \# 855061}
\begin{document}

\maketitle

\begin{center}
Department of Mathematical Sciences, Florida Institute of Technology, Melbourne, FL 
\end{center}

\begin{abstract}
This article studies a class of semilinear scalar field equations on the real line with variable coefficients in the linear terms. These coefficients are not necessarily small perturbations of a constant. We prove that under suitable conditions, the non-translation-invariant linear operator leads to steady states that are ``almost constant'' in the spatial variable. The main challenge of the proof is due to a spectral obstruction that cannot be treated perturbatively. Next, we consider stability of constant and near-constant steady states. We establish asymptotic stability for the vacuum state with respect to perturbations in $H^1\times L^2$, without placing any parity assumptions on the coefficients, potential, or initial data. Finally, under a parity assumption, we show asymptotic stability for near-constant steady states.
\end{abstract}

\section{Introduction}

We are concerned with the long-time behavior of solutions to semilinear scalar-field equations on the real line of the form
\begin{equation}\label{e:main}
\partial_t^2 u - Lu + F'(u) = 0, \quad (t,x) \in [0,\infty)\times \R,
\end{equation}
where $L$ is a linear second-order operator in the spatial variable:
\[ L u := a(x) \partial_x^2 u + b(x) \partial_x u + c(x) u.\]
Scalar-field models are used to investigate a wide variety of physical phenomena, such as fundamental particles in quantum field theory, cosmological inflation, and phase transitions in condensed matter physics; see e.g. \cite{kinks-domainwalls, lohe,khare,top-sol} for more background on scalar field equations.  In particular, our assumptions include (variable-coefficient versions of) the $\phi^4$ model (see \cite{segur1987, cuccagna2008kink, KMMphi4, ross2019}), the sine-Gordon equation \cite{ivancevic-sinegordon, cuenda2011sg, sine-gordon-book, chen2020sine-gordon, AMP2020sinegordon}, and higher-order field theories such as the $\phi^6$ model \cite{lohe}.

The stability of special solutions for scalar field equations is widely studied, but usually in the case of constant coefficients (most commonly $a(x) \equiv 1$, $b(x) \equiv 0$, $c(x) \equiv 0$). We would like to understand how the variable coefficients $a(x)$, $b(x)$, $c(x)$ in $L$ affect the stability theory of special solutions. Since the linear operator $L$ typically encodes the medium through which a disturbance propagates, handling a non-constant medium is desirable on physical grounds. We will also  see that the mathematical analysis of steady states (in terms of both existence and stability) is nontrivially affected by the presence of variable coefficients.

Our precise assumptions will be given below, but let us emphasize that our $L$ is not necessarily a perturbation of a given constant-coefficient operator.

%

We are interested in two types of stationary solutions of \eqref{e:main}, corresponding to $\xi \in \R$ such that $F'(\xi) = 0$:
\begin{enumerate}
\item Constant solutions $U(x)\equiv \xi$, in the cases $c(x) \equiv 0$ or $\xi = 0$. 
\item Non-constant solutions $U(x)$ decaying to $\xi$ as $x\to\pm\infty$, that are uniformly close to $\xi$. We construct these near-constant steady states, under suitable conditions on the coefficients and potential, in Theorem \ref{t:near-constant}. 
\end{enumerate}

\subsection{The class of equations considered}
Following \cite{snelson2016stability, alammari2021} we change variables to remove the second-order coefficient $a(x)$: let $y = \int_0^x a^{-1/2}(z) \dd z$ and write $u(t,y) =  u(t,x(y))$. Changing the definition of $b$, we write $b(y) = b(x(y)) - a^{-1/2}(x(y))\frac d {dy} a^{1/2}(x(y))$. 
This leads to the equation
\begin{equation}\label{e:main-y}
\partial_t^2 u - \partial_y^2 u -b(y)\partial_y u - c(y) u= -F'(u), \quad (t,y)\in \R\times \R.
\end{equation}
Our precise assumptions are stated in terms of the $y$ variable. We always assume 
\[F \in C^4(\R), \quad b, c \in L^1(\R) \cap L^\infty(\R).\]
For certain results, we will place extra conditions on $b$, $c$, and $F$.

We note that the energy 
\begin{equation}\label{e:energy}
 E(u(t,\cdot)) = \int_{\R} \left[ \frac 1 2 (\partial_t u)^2 + \frac 1 2 (\partial_y u)^2 - \frac 1 2 c(y)u^2 + F(u)\right] \omega(y)\dd y,
 \end{equation}
is conserved by the flow of \eqref{e:main-y}, where $\omega(y) = \exp(\int_{-\infty}^y b(z) \dd z)$. In order to ensure that the constant state $u \equiv \xi$ has finite energy, we assume (by shifting the potential $F$ if necessary, which does not affect the equation of motion) that
\[ F(\xi) = F'(\xi) = 0.\]


\subsection{Main results}

First, we address the existence of stationary solutions. If the zeroth-order coefficient $c(y)$ is identically $0$ and $F'(\xi) = 0$, then $u(y)\equiv \xi$ is obviously a stationary solution to \eqref{e:main-y}. On the other hand, if $c$ is nonzero and $\xi\neq 0$, then $u\equiv \xi$ is no longer a solution, but we can prove the existence of a stationary state $U(y)$ that is uniformly close to $\xi$ and decays to $\xi$ at $\pm \infty$:

%
%
%
%
%
%


\begin{theorem}\label{t:near-constant}
With $b$, $c$, and $F$ as above, assume in addition that $F''(\xi)>0$, and that
\begin{equation}\label{e:smallness}
 \|c\|_{L^1(\R)} + \|c\|_{L^\infty(\R)} \leq \min\left\{1, \frac\delta {|\xi|}\right\},
\end{equation}
for a sufficiently small $\delta>0$ depending on $b$ and $F$. We take $\min\left\{1,\delta/|\xi|\right\} = 1$ if $\xi = 0$. 

\smallskip

%
%
%
%
%
%
%
%
%
%
%

\noindent {\bf Case 1:} If $0$ is {\bf not} an $L^2(\R)$-eigenvalue of the linear operator
\[ \mathcal L_{b,c} := -\partial_y^2 - b\partial_y -(c-F''(\xi)),\]
then 
%
there exists a stationary solution $U(y)$ to \eqref{e:main-y}, with 
\begin{equation}\label{e:UxiL1}
 \|U-\xi\|_{L^1(\R)} + \|U-\xi\|_{L^\infty(\R)} +\|U'\|_{L^1(\R)} + \|U'\|_{L^\infty(\R)} \lesssim \delta.
 \end{equation}
If in addition, $c(y)$ satisfies $|c(y)| \leq K e^{-k|y|}$ for some $K \leq \min\{1, \delta/|\xi|\}$ and some $0< k < \sqrt{F''(\xi)}$, 
then we have the improved decay estimates
\[ \|e^{k|y|}(U(y)-\xi)\|_{L^\infty(\R)} + \|e^{k|y|}U'(y)\|_{L^\infty(\R)} \lesssim \delta.\]

\smallskip

\noindent {\bf Case 2:} If $0$ {\bf is} an $L^2(\R)$-eigenvalue of $\mathcal L_{b,c}$, assume in addition that 
$F'''(\xi) = 0$, and $F^{(4)}(\xi) \neq 0$. Then 
there exists a stationary solution $U(y)$ to \eqref{e:main-y}, with
\[ \|U-\xi\|_{L^1(\R)} + \|U-\xi\|_{L^\infty(\R)} +\|U'\|_{L^1(\R)} + \|U'\|_{L^\infty(\R)} \lesssim \delta^{1/3}.\]
As in the previous case, if $c$ satisfies $|c(y)| \leq K e^{-k|y|}$ with $0< k< \sqrt{F''(\xi)}$, then we have
\[ \|e^{k|y|}(U(y)-\xi)\|_{L^\infty(\R)} + \|e^{k|y|}U'(y)\|_{L^\infty(\R)} \lesssim \delta^{1/3}.\]

In either Case 1 or Case 2, if $b$ is odd, $c$ is even, and $F(\xi + \cdot)$ is even, then $U$ is odd.
\end{theorem}


Note that $F(u) = 1-\cos(u)$ with $\xi = k\pi$, $k\neq 0$, is admissible in Case 2. 

Let us comment on the proof of Theorem \ref{t:near-constant}. Writing $U = \xi + U_\delta$, one obtains an equation of the form $\mathcal L_{b,c} U_\delta = c\xi+\mathcal N(U_\delta)$, where $\mathcal L_{b,c}$ is the linearization around $\xi$, and $\mathcal N(U_\delta)$ is a nonlinearity that depends on $F$. In Case 1, following a classical strategy, we convert the equation for $U_\delta$ into an integral equation via the Green's function for $\mathcal L_{b,c}$ and solve this integral equation via contraction. Case 2 is more difficult, because $\mathcal L_{b,c}$ is only invertible on the orthogonal complement of the $0$-eigenspace $E_0$. To get around this obstacle, we decompose $U_\delta$ into its projections onto $E_0$ and $E_0^\perp$, and the main difficulty of the proof becomes controlling the nonlinear interactions between these two components, arising from the term $\mathcal N(U_\delta)$. One key observation is that these two components of $U_\delta$ are of different orders in the parameter $\delta$. In particular, the $E_0$ component is the reason for the coarser upper bound $\|U-\xi\|\lesssim \delta^{1/3}$ in Case 2.

As mentioned above, we do not make any smallness assumption on the coefficient $b$. Therefore, even though the steady state $U$ is a perturbation of a constant, we cannot treat the equation itself as a perturbation of some constant-coefficient equation whose properties are well understood.

Next, we address the stability of constant and near-constant steady states $U$. We study perturbations of $U$ in the \emph{energy space} $H^1(\R)\times L^2(\R)$. By standard techniques, \eqref{e:main} is locally well-posed for initial data $(u,\partial_t u)|_{t=0} = (U+v_1,v_2)$, with $(v_1,v_2)\in H^1(\R)\times L^2(\R)$. 
%
After showing orbital stability in Proposition \ref{p:orbital-constant}, we prove asymptotic stability in two main regimes. First, we address the vacuum solution, with no parity condition on $F$, $b$, $c$, or the initial data:
\begin{theorem}\label{t:asymptotic}
Let $F(0) = F'(0) = 0$, $F''(0)>0$, and 
assume that $b, c \in W^{1,\infty}(\R)$ satisfy the conditions
\begin{equation}\label{e:b-c-coercive}
4\lambda \tanh(y/\lambda) b(y) \leq \sech^2(y/\lambda), \quad 2\lambda \tanh(y/\lambda) c' + (\sech^2(y/\lambda) b)' \geq 8\sech^2(y/\lambda),
 \end{equation}
for some $\lambda>2$. 

Then there exists $\eps>0$ such that if the initial data $(u_0,u_1)$ satisfies
\[ \|u_0 \|_{H^1(\R)} + \|u_1\|_{L^2} < \eps,\]
then the vacuum solution is asymptotically stable in the sense that
 \[ \lim_{t\to \infty} \left(\|u(t)\|_{H^1(I)} + \|\partial_t u(t)\|_{L^2(I)}\right) = 0,\]
for any bounded interval $I\subset \R$.
\end{theorem}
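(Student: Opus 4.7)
The plan is to establish asymptotic stability via a virial-type argument, building on the orbital stability estimate from Proposition~\ref{p:orbital-constant}, which provides a global-in-time bound $\|u(t)\|_{H^1}+\|\partial_t u(t)\|_{L^2}\lesssim \eps$. Write $F(u)=\tfrac12 F''(0)u^2+R(u)$ with $R(u)=O(u^3)$ for $|u|\leq 1$, so that the mass term is explicit and $R$, $R'$ contribute only cubic errors after orbital stability is used.

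First, introduce the virial functional
\[ \mathcal{I}(t) := \int_{\R}\Bigl(\phi(y)\,\partial_y u\,\partial_t u + \tfrac12 \phi'(y)\,u\,\partial_t u\Bigr)\dd y, \qquad \phi(y) := \lambda\tanh(y/\lambda), \]
so that $\phi'(y)=\sech^2(y/\lambda)$. The $\tfrac12\phi'\,u\,\partial_t u$ correction is chosen so that, upon differentiating in $t$ and substituting \eqref{e:main-y}, the $(\partial_t u)^2$ contributions cancel. Several integrations by parts then give
\[ \dot{\mathcal{I}}(t) = -\int (\phi' - \phi b)(\partial_y u)^2\dd y \;-\; \tfrac14\int\bigl(2\phi c' + (\phi' b)' - \phi'''\bigr)u^2 \dd y \;+\; \mathcal N(u), \]
where $\mathcal N(u) := \int \phi'(R(u)-\tfrac12 uR'(u))\dd y$ is cubic and satisfies $|\mathcal N(u)|\lesssim \|u\|_{L^\infty}\int\phi' u^2\dd y\lesssim \eps \int\phi' u^2\dd y$. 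Crucially, the $F''(0)u^2$ contribution drops out: $\int\phi' F(u)\dd y$ and $-\tfrac12\int \phi' u F'(u)\dd y$ cancel at the quadratic level, which is why coercivity is insensitive to the value of $F''(0)$. The hypotheses \eqref{e:b-c-coercive} translate to $\phi'-\phi b\geq \tfrac34\phi'$, and, combined with the elementary bound $|\phi'''|\leq 4\lambda^{-2}\phi'$ (obtained by direct computation), to $2\phi c'+(\phi' b)'-\phi''' \geq (8-4\lambda^{-2})\phi' > 7\phi'$ whenever $\lambda>2$. Absorbing $\mathcal N(u)$ for $\eps$ small, we conclude $\dot{\mathcal I}(t) \leq -c_0\int \phi'((\partial_y u)^2+u^2)\dd y$, and since $|\mathcal I(t)|\lesssim \eps^2$ uniformly, integrating in $t$ yields
\[ \int_0^\infty \int_{\R} \phi'\bigl((\partial_y u)^2+u^2\bigr)\dd y\dd t \lesssim \eps^2. \]

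To recover the analogous control of $\partial_t u$, introduce the secondary functional $\mathcal J(t) := \int \phi' u\,\partial_t u\dd y$, which is again uniformly $O(\eps^2)$. A direct computation produces
\[ \dot{\mathcal J}(t) = \int\phi'(\partial_t u)^2\dd y - \int\phi'(\partial_y u)^2\dd y + \int q(y)\,u^2\dd y + \int r(y)\,u\,\partial_y u\dd y + \widetilde{\mathcal N}(u), \]
with $|q|, |r|\lesssim \phi'$. Solving for $\int\phi'(\partial_t u)^2\dd y$ and integrating in $t$, every term on the right is already in $L^1([0,\infty))$, giving $\int_0^\infty \int \phi'(\partial_t u)^2\dd y\dd t\lesssim \eps^2$. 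Set $g(t) := \int\phi'\bigl((\partial_y u)^2+(\partial_t u)^2+u^2\bigr)\dd y$, so that $g\in L^1([0,\infty))$. A separate differentiation, in which $\partial_y^2 u$ is eliminated using the equation and the two $\phi'\partial_t u\,\partial_t^2 u$ terms cancel, shows $|\dot g(t)|\lesssim \eps^2$ uniformly in $t$; hence $g$ is uniformly continuous, and Barbalat's lemma forces $g(t)\to 0$. Since $\phi'(y)\geq \sech^2(R/\lambda)>0$ on $I=[-R,R]$, this yields $\|u(t)\|_{H^1(I)}^2 + \|\partial_t u(t)\|_{L^2(I)}^2 \leq \sech^{-2}(R/\lambda)\,g(t)\to 0$ as $t\to \infty$, completing the argument.

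The principal obstacle is verifying that the coercivity produced by the virial computation matches the hypotheses \eqref{e:b-c-coercive} term-by-term; the constant $8$ on the right of the second inequality is precisely what beats $|\phi'''|\leq 4\lambda^{-2}\phi'$ with strict positive slack for $\lambda>2$, and the factor $4$ on the left of the first inequality likewise accounts for the $-\tfrac12\phi'$ terms arising twice in the $(\partial_y u)^2$-balance. A subordinate technical point is that the integrations by parts invoke $\partial_y^2 u$ and $\partial_t^2 u$, which lie outside the energy class; one performs the computation for smooth approximations, then passes to the limit using density of $H^2\times H^1$ data and continuous dependence.
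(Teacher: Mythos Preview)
Your argument is correct and follows the same virial strategy as the paper (the functional $\mathcal I$ with multiplier $\phi=\lambda\tanh(y/\lambda)$, a secondary functional to capture $\partial_t u$, then passage from time-integrability to pointwise decay). The execution, however, is cleaner than the paper's. The paper passes to the weighted variable $w=\sech(y/\lambda)\,v_1$, invokes Lemma~\ref{l:B} to lower-bound the bilinear form $\mathcal B(v_1)$ by $\|\partial_y w\|_{L^2}^2-\tfrac{1}{2\lambda^2}\int\sech^2 w^2$, and then uses the nontrivial weighted inequality of Lemma~\ref{l:psi-prime} to control the cubic remainder by $\|\partial_y w\|_{L^2}^2$, before converting everything back to $v_1$. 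You bypass all of this by keeping the computation in $v_1$: the elementary bound $|\phi'''|\le 4\lambda^{-2}\phi'$ handles the $\psi'''$ term directly, and the trivial estimate $\int\phi'|u|^3\le \|u\|_{L^\infty}\int\phi' u^2$ lets you absorb the cubic error into the $u^2$-coercivity coming from \eqref{e:b-c-coercive}. This shortcut works precisely because, in the vacuum setting of Theorem~\ref{t:asymptotic}, the positive $u^2$-term is supplied by the coefficient hypothesis rather than by the structure of $\mathcal B$; the paper's $w$-variable machinery is really tailored to the odd case (Theorem~\ref{t:asymptotic-odd}), where the $b,c$ terms have the opposite role and one must squeeze positivity out of $\mathcal B$ itself via \eqref{e:odd-coercivity}. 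Your Barbalat endgame and the paper's sequence-plus-integration argument in Theorem~\ref{t:no-breathers} are equivalent.
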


Let us make the following remarks on Theorem \ref{t:asymptotic}:
\begin{itemize}
\item The conclusion cannot be improved by replacing $I$ with $\R$, because such a conclusion would violate energy conservation. 



\item Condition \eqref{e:b-c-coercive} arises from the proof. It is a condition both on the growth rate of $b$ and $c$ for small values of $y$, and their exponential decay. Because of this, it is difficult to replace with a more concise condition. The set of $b, c$ satisfying \eqref{e:b-c-coercive} is not empty. To take a crude example, for $\lambda > 12$ let $c(y) = -8\lambda^4 \sech(2y/\lambda)$, and
\[ b(y) = \begin{cases} 16y, &|y|< 1/\lambda,\\
\sgn(y) (16/\lambda)e^{-(10/\lambda)(|y|-1/\lambda)}, &|y|> 1/\lambda.\end{cases}\]

\item Theorem \ref{t:asymptotic} applies to any sufficiently smooth potential $F$ with a stable potential well at $0$. This lies in contrast to the constant-coefficient case, where stability of vacuum seems to depend sensitively on the choice of $F$. For example, the vacuum state in the constant-speed sine-Gordon equation ($F'(u) = \sin(u)$) is not asymptotically stable due to the presence of breathers \cite{cuenda2011sg}, but in the scalar field model with $F'(u) = u - u^3$ (which differs from $\sin(u)$ only at the third order), the vacuum state is conjectured to be asymptotically stable \cite{soffer1999resonance}, and this has been confirmed for the special case of odd perturbations \cite{KMM2017breathers}.

\item In the original $x$ variables used to state our equation \eqref{e:main}, the hypotheses of Theorem \ref{t:asymptotic} would read as follows: $a(x) \in W^{2,\infty}(\R)$ with $a(x) \geq a_0>0$ for all $x$, and $b(x), c(x) \in W^{1,\infty}(\R)$, satisfying, with $y(x) = \int_0^x a^{-1/2}(z) \dd z$,
\[
\begin{split}
&4 \lambda \tanh(y(x)/\lambda)\left(b(x) - \frac{da/dx}{\sqrt{a(x)}}\right) \leq \sech^2(y(x)/\lambda),\\
&\sqrt{a(x)}\left[2\lambda \tanh(y(x)/\lambda)  \frac {dc}{dx} + \frac d {dx}\left(\sech^2(y(x)/\lambda) \left(b(x) - \frac{a'(x)}{\sqrt{a(x)}}\right)\right)\right] \geq 8\sech^2(y(x)/\lambda).  
\end{split}
\]

\end{itemize}

Next, we address the stability of near-constant states $U$ with respect to odd perturbations. Here, we take $\xi \neq 0$ because the near-constant solutions decaying to $0$ are in fact identically zero (as can be seen from the proof of Theorem \ref{t:near-constant}), so the case $\xi = 0$ is already covered by Theorem \ref{t:asymptotic}. We focus on the sine-Gordon equation because the nonlinearity around $\xi = 2k\pi$, $k\neq 0$, is odd. However, the method of proof can be generalized to other potentials by deriving a corresponding version of Lemma \ref{l:Fterm}.

\begin{theorem}\label{t:asymptotic-odd}
Let $\xi = 2k\pi$, $k\neq 0$, let $F(u) = 1-\cos(u)$ in \eqref{e:main-y}, and let $U(y)$ be the near-constant state guaranteed by Theorem \ref{t:near-constant}. Assume there exists $\mu>0$ such that 
\begin{equation}\label{e:cyF}
c(y)\leq 1 -\mu, \quad y\in \R,
\end{equation}
(so that, by Proposition \ref{p:orbital-constant}, $U(y)$ is orbitally stable). Assume in addition that 
$ |c(y)| \lesssim  e^{-|y|}$, 
and that $b, c \in W^{1,\infty}(\R)$ satisfy the sign conditions
\begin{equation}\label{e:sign-condition}
 \sgn(y)b(y)\leq 0, \quad \sgn(y)b'(y)\geq 0, \quad \sgn(y)c'(y) \geq 0.
 \end{equation}

Then there exists $\eps>0$ such that if the initial data $(u_0,u_1) = (U+v_1,v_2)$ with $(v_1,v_2)$ odd in $y$, and satisfies
\[ \|u_0 - U \|_{H^1(\R)} + \|u_1\|_{L^2} < \eps,\]
then
 \[ \lim_{t\to \infty} \left(\|u(t)-U\|_{H^1(I)} + \|\partial_t u(t)\|_{L^2(I)}\right) = 0,\]
for any bounded interval $I\subset \R$.
\end{theorem}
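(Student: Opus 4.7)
My approach is to combine orbital stability with a Morawetz-type virial identity adapted to odd perturbations, following the general strategy of Kowalczyk--Martel--Mu\~noz. First, set $u(t,y) = U(y) + v(t,y)$ and subtract the stationary equation satisfied by $U$ to obtain the perturbation equation
\[ \partial_t^2 v - \partial_y^2 v - b(y)\, \partial_y v - c(y)\, v = -\bigl[\sin(U+v) - \sin U\bigr]. \]
The requirement that odd data remain odd implicitly forces the parity setting of the final clause of Theorem \ref{t:near-constant} ($b$ odd, $c$ even, and $F(\xi+\cdot) = 1-\cos$ even), so $U$ is odd; the oddness of $\sin$ then guarantees that the equation indeed preserves odd parity, in particular $v(t,0) \equiv 0$. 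Proposition \ref{p:orbital-constant} combined with \eqref{e:cyF} yields the global orbital bound $\|v(t)\|_{H^1} + \|\partial_t v(t)\|_{L^2} \lesssim \eps$.

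The heart of the proof is a virial functional of the form
\[ I(t) = -\int_\R \omega(y)\, \partial_t v \Bigl[\phi(y)\, \partial_y v + \tfrac{1}{2}\phi'(y)\, v\Bigr]\, dy, \]
where $\phi(y) = \lambda\tanh(y/\lambda)$ is odd and bounded with $\phi'>0$, and $\omega(y) = \exp(\int_{-\infty}^y b)$ is the natural weight that makes the spatial part of the equation self-adjoint in $L^2(\omega\, dy)$. Differentiating in time, eliminating $\partial_t^2 v$ via the equation, and integrating by parts splits $\dot I$ into a quadratic ``kinetic'' part, a $c(y)v$ part, and a nonlinear part coming from $\sin(U+v)-\sin U$. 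The kinetic part yields a positive integral $\int \phi' (\partial_y v)^2\, \omega\, dy$ together with cross terms built from the products $\phi b$, $\phi b'$, $\phi c$, $\phi c'$; because $\phi$ is odd with $\sgn(y)\phi(y)\geq 0$, the sign conditions \eqref{e:sign-condition} ensure that all these cross terms have a favorable sign.

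For the nonlinear contribution I would use the identity $\sin(U+v) - \sin U = \cos(U)\sin v - \sin U\cdot 2\sin^2(v/2)$: the first piece is essentially quadratic in $v$ with leading coefficient $\cos U \approx 1$ (because $U\to 2k\pi$), and feeds positively into the virial; the second piece is cubic in $v$ and absorbable by $L^\infty$-smallness via orbital stability and Sobolev embedding. This is exactly the role of (a sine-Gordon-tailored) Lemma \ref{l:Fterm}. The $c(y)v$ term is controlled by combining the exponential decay $|c(y)|\lesssim e^{-|y|}$ with the oddness of $v$, which via the Hardy inequality $\int v^2/y^2\, dy \lesssim \int(\partial_y v)^2\, dy$ absorbs any potential singular behavior near $y=0$. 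Collecting everything, the objective is a coercive lower bound of the shape
\[ \frac{d}{dt} I(t) \gtrsim \int_\R \chi(y)\bigl[(\partial_y v)^2 + v^2\bigr]\, dy, \]
for a localized positive weight $\chi$, up to error terms absorbable for small $\eps$.

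Since $|I(t)|$ is uniformly bounded by orbital stability, integrating in time yields the local spacetime bound $\int_0^\infty\!\int \chi[(\partial_y v)^2+v^2]\,dy\,dt < \infty$; the $(\partial_t v)^2$ component can be recovered by a companion virial or by applying the energy identity on a compact interval. Combined with uniform continuity of $t\mapsto(v,\partial_t v)$ in $H^1(I)\times L^2(I)$ (which follows from the equation and the uniform $H^1\times L^2$ bound), this yields the local decay as $t\to\infty$. The main obstacle is precisely the coercivity step: choosing $\phi$ and $\lambda$ so that \eqref{e:sign-condition}, the oddness-based Hardy control, and Lemma \ref{l:Fterm} together produce a quadratic form strong enough to dominate both the cubic nonlinear remainder and the unsigned $c(y)v$ contribution. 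The exponential decay hypothesis on $c$ is what ultimately allows the virial to beat the long-range part of the linear term.
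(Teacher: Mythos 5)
Your overall architecture matches the paper's: a Kowalczyk--Martel--Mu\~noz virial with $\psi(y)=\lambda\tanh(y/\lambda)$, sign conditions to handle the $b,c$ terms, a companion functional to recover $\partial_t v$, and integrability of the localized energy plus Lipschitz-in-time control to conclude local decay (this is exactly Proposition \ref{p:integral-bound-odd} and Theorem \ref{t:no-odd-breathers}). But the two quantitative steps that constitute the actual proof are missing or misdescribed. First, the coercivity step, which you yourself flag as ``the main obstacle,'' is left open; in the paper it is resolved by quoting the odd-coercivity estimate \eqref{e:odd-coercivity}, $\mathcal B(v)\geq \tfrac34\|\partial_y w\|_{L^2}^2$, together with $\|\partial_y w\|_{L^2}\gtrsim \|v\|_{H^1_\omega}$ (from \cite[Formula (2.20)]{KMM2017breathers}, with $\lambda=100$), while the $b,c$ contributions are first integrated by parts to the form $\int_\R\bigl(-\psi b\,(\partial_y v)^2+\tfrac14(2\psi c'+(\psi' b)')v^2\bigr)\dd y$ and then discarded as nonnegative using \eqref{e:sign-condition}. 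A gesture toward a Hardy inequality does not substitute for this chain of estimates, and your insertion of the weight $\omega=\exp(\int_{-\infty}^y b)$ into the virial changes the integration-by-parts bookkeeping, so the favorable signs under \eqref{e:sign-condition} would have to be re-verified rather than asserted.

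Second, and more seriously, your treatment of the nonlinearity would fail as written. The piece $\cos U\,\sin v$ is linear in $v$ at leading order, so its pairing with $\psi\partial_y v+\tfrac12\psi' v$ produces a quadratic term of the same size as the coercive term; it does not ``feed positively'' and it cannot be absorbed by $\eps$-smallness. The paper's Lemma \ref{l:Fterm} (Case 2) handles it by an exact integration by parts in which the quadratic part cancels, leaving $\int_\R\psi'\cos U\,\bigl(\cos v-1+\tfrac12 v\sin v\bigr)\dd y=O\bigl(\int_\R\psi' v^4\dd y\bigr)$ plus terms weighted by $\sin U$ and $U'$; it is precisely there that the hypothesis $|c(y)|\lesssim e^{-|y|}$ enters, via the exponential decay $|\sin U|+|U'|\lesssim\delta e^{-(1-\delta)|y|}\lesssim\delta\psi'$ furnished by Theorem \ref{t:near-constant} -- not, as you suggest, to ``beat the long-range part of the linear term.'' Indeed the linear $c\,v$ term cannot be controlled by the decay of $c$ plus Hardy, since no smallness of $c$ is assumed; it must be handled by the sign condition $\sgn(y)c'\geq0$ after integration by parts, as you state in one sentence but then contradict. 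Without the cancellation in Lemma \ref{l:Fterm} and the quoted odd-coercivity inputs, the virial inequality has uncontrolled quadratic errors and the argument does not close.
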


This theorem should be contrasted with the behavior seen in the constant speed sine-Gordon equation $\partial_t^2 u - \partial_x^2 u = -\sin u$. Since this constant-coefficient equation is invariant under $u(x)\mapsto u(x)+2k\pi$, the constant solutions $u(x) \equiv 2k\pi$ have the same stability properties as the vacuum state $u(x) \equiv 0$. It is well-known that this vacuum state is not asymptotically stable in the energy space, due to the existence of breather solutions that oscillate near zero (or $2k\pi$) and do not decay as $t\to \infty$ (see, e.g. \cite{cuenda2011sg}). Therefore, our result shows that sine-Gordon breathers disappear under perturbations of the linear part of the equation.

The proof strategy for Theorems \ref{t:asymptotic} and \ref{t:asymptotic-odd}, adapted from \cite{KMMphi4, KMM2017breathers} is based on a Virial functional of the form
\[\mathcal I(v) = \int_\R \left(\lambda\tanh(y/\lambda) \partial_y v + \frac 1 2 \tanh'(y/\lambda) v\right) \partial_t v \dd y,\]
for some $\lambda >0$, where $v$ is the difference between the solution $u$ and the steady-state $U$. 
The functional $\mathcal I$ is a Lyaponov-type functional satisfying (a) coercivity of $\frac d {dt}\mathcal I$ with respect to the (weighted) $H^1$ norm of $v$, and (b) $\mathcal I$ itself is controlled from above by $H^1\times L^2$ norms of $v$.
 Establishing property (a) is the main ingredient of the proofs. 

This type of Virial argument was pioneered by Kowalcyz-Martel-Mu\~noz, who used Virial functionals of the same form to prove asymptotic stability with respect to odd perturbations for the $\phi^4$ kink \cite{KMMphi4}, and also the absense of odd breathers close to the vacuum solution for a class of equations with potential $F$ sufficiently smooth near zero \cite{KMM2017breathers}. Related techniques were later used in \cite{KMMV2020kink,KMkink, cuccagna2022kink, li2022quadratic} to establish asymptotic stability results for various classes of scalar field models. See also \cite{cuccagna2019nls, cuccagna2021nls} for related approaches in the context of nonlinear Schr\"odinger equations with trapping potential. 

These works all applied to equations with constant-coefficient linear terms. In \cite{snelson2016stability}, the second named author generalized the result of \cite{KMMphi4} to the case of variable coefficients, but with $c=0$ and $b$ small. The present article deals with non-perturbative coefficients $b$ and $c$. 
One novelty of our proof compared to \cite{snelson2016stability} is  
in extending the coercivity property of $\frac d{dt} \mathcal I$ to functions that are not necessarily odd. The mechanism for this coercivity is provided by the linear coefficients $b$ and $c$, which in some sense act as damping terms. 

If $b$ and $c$ are both small in a suitable norm, analysis along the lines of the proof of Theorem \ref{t:asymptotic-odd} can establish asymptotic stability of the vacuum state with respect to odd perturbations, without the assumption \eqref{e:b-c-coercive}. 
We omit the details, since the key ideas are similar to those contained in this article, but simpler.

\subsection{Comparison with constant-coefficient case}\label{s:compare}


In the constant-coefficient regime, asymptotic stability of constant steady-states does not hold in general, as shown by the example mentioned above of the sine-Gordon equation with $F(u) = 1-\cos(u)$. 
Breathers in constant-speed sine-Gordon are even in $x$, and it is known that there are no \emph{odd} breathers close to the vacuum solution, by the result of \cite{KMM2017breathers}. Sine-Gordon breathers are related to the integrability of the system, and in general breathers are understood to be a rare phenomenon. 

For the variable-coefficient equation \eqref{e:main-y}, we identify in this article a class of pairs $(L,F)$ such that constant and near-constant solutions are asymptoticaly stable. The linear terms in $L$ turn out to be the key factor in deriving this stability result, which is somehow less sensitive to the choice of $F$ than in the constant-speed case. For example, we establish asymptotic stability for certain variable-speed sine-Gordon equations, even though asymptotic stability of vacuum does not hold for the constant-speed equation, as noted in the previous paragraph.


\subsection{Related work} Stability of vacuum has been studied intensively for nonlinear dispersive equations, and a full bibliography is outside the scope of this introduction. We refer the reader to results on 
 nonlinear Klein-Gordon \cite{bambusi2011, sterbenz2016decay, germain2020nlkg, delort2001nlkg, lindblad2015scattering, lindblad2019scattering}, nonlinear Schr\"odinger \cite{hayashi1998nls, ifrim2015nls}, and other dispersive equations \cite{ifrim2019b-o,hgm2018cc,martinez2020hartree, lindblad-tao}. Regarding variable coefficients, we refer to, e.g. \cite{cuccagna2014nls, delort2016nls, germain2018nls-potential, naumkin2016nls} for recent results on the long-time decay of small solutions for NLS models with variable zeroth-order linear terms. 
More generally, there are ``no-breathers'' results that rule out global, uniformly small solutions that do not vanish as $t\to \infty$: see \cite{segur1987, vuillermot, denzler1993sinegordon, KMM2017breathers} and the references therein. See also the recent work \cite{kohler2021} which established existence of breather solutions for a class of variable-coefficient wave equations on $\R\times\R$, which are quasilinear and therefore do not overlap with our setting.





\section{Existence of near-constant stationary solutions}\label{s:exist}

This section is devoted to the proof of Theorem \ref{t:near-constant}. To begin, we write $U = \xi + U_\delta$ and plug this ansatz into $-U''-bU'-cU = -F'(U)$ to obtain
\begin{equation}\label{e:Udelta}
 -U_\delta'' - bU_\delta' - (c-F''(\xi)) U_\delta = c\xi + \mathcal N(U_\delta),
 \end{equation}
where $\mathcal N(U_\delta) = -F'(\xi+U_\delta) + F''(\xi)U_\delta$. Define $\mathcal L_{b,c} = -\partial_y^2 - b \partial_y - (c-F''(\xi))$. This operator encodes the linear part of \eqref{e:Udelta}. We would like to find solutions $Y_{\pm\infty}$ to the linear equation $\mathcal L_{b,c} Y = 0$, 
which can be written in system form, with $\mb Y = ( Y,Y')$, as
\begin{equation}\label{e:Y-system}
 \mb Y' = \left(\left(\begin{array}{cc}0 & 1\\ F''(\xi) & 0     \end{array}\right) +  \left(\begin{array}{cc}0 & 0\\ -c(y)   & -b(y)     \end{array}\right)\right)\mb Y. 
 \end{equation}
Let $m = \sqrt{F''(\xi)}>0$. Using ODE techniques (for a detailed proof, see \cite[Lemma A.2]{alammari2021}), there exist solutions $\mb Y_{\pm\infty}$ to this system, defined on all of $\R$, with 
\[\lim_{y\to \pm\infty} e^{\pm m y}\mb Y_{\pm \infty}(y) = \left(\begin{array}{c} 1\\\mp m\end{array}\right).\]
We also have $|Y_{\pm\infty}(y)|\leq C e^{\mp m|y|}$ globally on $\R$, for a constant depending on $F$, $b$, and $c$. However, the dependence on $c$ is through an upper bound for $\|c\|_{L^1(\R)}$, which is bounded by 1 as a result of \eqref{e:smallness}. 

 Let $W_{\mb Y}(y) = \det(\mb Y_{-\infty} ,\mb Y_{\infty})$ be the Wronskian of $\mb Y_\infty$ and $\mb Y_{-\infty}$. Abel's formula implies $W_{\mb Y}(y) = W_{\mb Y}(0)\exp(\int_{0}^y b(z) \dd z)$. Since $b\in L^1(\R)$, this exponential is bounded away from zero. As a result, $\mb Y_\infty(y)$ and $\mb Y_{-\infty}(y)$ are either parallel for all $y\in \R$ (in which case $0$ is an $L^2$ eigenvalue of $\mathcal L_{b,c}$), or linearly independent for all $y\in \R$. We address these two cases in turn.

\subsection{Case 1: $0$ is not an eigenvalue of $\mathcal L_{b,c}$} 

The techniques for this case (which is the simpler case) are largely inspired by our earlier work \cite[Theorem 1.1]{alammari2021}. We give the full details for the convenience of the reader. 

In this case, $\mb Y_{\pm\infty}$ are linearly independent, and we can  define the Green's function
\[ G(y,w) := \frac 1 {W_{\mb Y}(w)} \begin{cases} Y_{-\infty}(y) Y_\infty(w), & y< w,\\
Y_{\infty}(y)Y_{-\infty}(w), & w\leq y,\end{cases}\]
The following estimate for the Green's function can be found, for example, in the proof of \cite[Theorem 1.1]{alammari2021}: with $\|\cdot\|_{X} = \|\cdot\|_{L^1(\R)} + \|\cdot \|_{L^\infty(\R)}$,
\begin{equation}\label{e:X-norm}
 \left\|\int_\R G(\cdot, w) \eta(w) \dd w \right\|_{X} + \left\|\partial_y \int_\R G(\cdot, w) \eta(w) \dd w \right\|_{X}  \leq C \|\eta\|_{X},
 \end{equation}
for a constant $C$ depending on $\|b\|_{L^1(\R)}$, $\|c\|_{L^1(\R)}$ (which is bounded by 1 from \eqref{e:smallness}), and $m$. 

This Green's function $G$ allows us to write \eqref{e:Udelta} as 
\begin{equation}\label{e:Udelta-integral}
 U_\delta(y) = (\mathcal T U_\delta)(y) := \xi \int_\R G(y,w) c(w) \dd w + \int_\R G(y,w) \mathcal N(U_\delta)(w) \dd w.
 \end{equation}
Our goal is to solve this integral equation via a fixed point argument. Regarding the first term in \eqref{e:Udelta-integral}, we have
\[ \left\|\xi \int_\R G(\cdot,w) c(w) \dd w\right\|_X \leq C|\xi| \|c(w)\|_{X}.\]
As a result of \eqref{e:smallness}, this term is bounded by a constant $C_0$ times $\delta$. 

For the second term in \eqref{e:Udelta-integral}, since $F$ is $C^3$, we have, for $\eta\in X$,
\begin{equation}\label{e:nonlinearity}
  |\mathcal N(\eta)| =\left|-F'(\xi+\eta) + F''(\xi)\eta \right| \leq K \eta^2,    
  \end{equation}
for some $K>0$ depending on the $C^3$ norm of $F$. Therefore, 
\[\begin{split}
 \|\mathcal T \eta\|_{X} &\leq C_0\delta + K \left\|\int_\R G(\cdot,w) \eta^2 (w) \dd w\right\|_X \\
 &\leq C_0\delta + K \|\eta\|_X^2, 
 \end{split}\]
 where the value of $K$ changes line-by-line, and we have used the obvious interpolation $\|\eta\|_{L^2} \leq \|\eta\|_{L^1}^{1/2}\|\eta\|_{L^\infty}^{1/2}$ in the last inequality.
 
 Defining $\mathcal A_\delta := \{\eta\in X : \|\eta\|_X \leq 2C_0 \delta\}$, we claim $\mathcal T$ maps $\mathcal A_\delta$ into itself, if $\delta$ is small enough. Indeed, our estimates imply that for $\eta\in \mathcal A_\delta$,
 \[ \|\mathcal T\eta\|_X \leq C_0 \delta + K \|\eta\|_{X}^2 \leq C_0 \delta + 4C_0^2 K \delta^2. \]
 Choosing $\delta < 1/(4C_0K)$, we see $\mathcal T\eta \in \mathcal A_\delta$, as claimed.

 Next, for $\eta_1, \eta_2 \in \mathcal A$, we have from Taylor's Theorem that
\[ F'(\xi+\eta_1) = F'(\xi+\eta_2) + F''(\xi+\eta_2)(\eta_1-\eta_2) + \frac 1 2 F'''(\zeta_y)(\eta_1-\eta_2)^2,\]
for some $\zeta_y \in \R$ depending on $y$. Using this in $\mathcal N(\eta_1) - \mathcal N(\eta_2)$, we have
\begin{equation}\label{e:difference}
\begin{split}
 |\mathcal N(\eta_1) - \mathcal N(\eta_2)| &=| F'(\xi+\eta_1) - F'(\xi+\eta_2) - F''(\xi)(\eta_1-\eta_2)|\\
 &= \left|[F''(\xi+\eta_2) - F''(\xi)](\eta_1 - \eta_2) +\frac 1 2 F'''(\zeta_y)(\eta_1 - \eta_2)^2\right|\\
 &\leq |\max |F'''(s)||\eta_2||\eta_1-\eta_2| + \frac 1 2 |F'''(\zeta_y)|(\eta_1-\eta_2)^2\\
 &\leq C \delta|\eta_1 - \eta_2|,
 \end{split}
 \end{equation}
for a constant $C>0$. Our estimates for $G$ now imply
 \[ \|\mathcal T\eta_1 - \mathcal T \eta_2\|_X \leq \left\| \int_\R G(\cdot,w) [\mathcal N(\eta_1) - N(\eta_2)] \dd w \right\|_X \leq C \delta \|\eta_1-\eta_2\|_X.\]
 For $\delta>0$ small enough, $\mathcal T$ is a contraction on $\mathcal A_\delta$, and a unique solution $U_\delta$ to \eqref{e:Udelta-integral} exists in $\mathcal A_\delta$. 
 
 To obtain the claimed estimate for the derivative $U' = U_\delta'$, we differentiate \eqref{e:Udelta-integral} and use \eqref{e:X-norm} and \eqref{e:nonlinearity}:
 \begin{equation}\label{e:deriv-est}
  \begin{split}
 \|U_\delta'\|_{X} &= \left\|\partial_y \int_\R G(y,w) [\xi c(w) + \mathcal N(U_\delta)(w)] \dd w\right\|_X \\
 & \leq C \left( |\xi| \|c(w)\|_X + \|\mathcal N(U_\delta)\|_X \right)\\
 & \leq C\delta + K \|U_\delta^2\|_X  \lesssim \delta,  \end{split}
 \end{equation}
 where, as above, we have used that $|\xi|\|c(w)\|_X \lesssim \delta$ because of assumption \eqref{e:smallness}.

Next, assume that $c(y)$ has exponential decay of order $\lesssim e^{-k|y|}$ for some $k>0$. For fixed $k$, define 
\[\|h\|_{\sim} := \sup_{y\in\R} e^{k|y|} |h(y)|.\]
We claim the following additional estimate for the operator $\eta \mapsto \int_\R G(\cdot,w)\eta(w)\dd w$:
\begin{equation}\label{e:exp-bound}
 \left\|\int_\R G(\cdot,w) \eta(w)\dd w \right\|_{\sim} \leq C \| \eta\|_{\sim}, \quad \text{ if } 0< k < m,
 \end{equation}
for some $C>0$ depending on $k$ and $m$. Indeed, since $Y_{\pm \infty}(y) \sim e^{\mp m y}$ as $y\to \pm \infty$, we have
\[ \begin{split}
\int_\R G(y,w) \eta(w) \dd w &= \frac 1 {W_{\mb Y}(y)} \left[ Y_\infty(y)\int_{-\infty}^y Y_{-\infty}(w) \eta(w) \dd w + Y_{-\infty}(y) \int_y^\infty Y_\infty(w) \eta(w) \dd w\right] \\
& \leq C \|\eta\|_{\sim} \left[ e^{-my} \int_{-\infty}^y e^{mw} e^{-k|w|} \dd w + e^{my} \int_y^\infty e^{-mw} e^{-k|w|} \dd w\right].
\end{split} \]
Focusing on the first term on the right, when $y<0$ we have
\[ e^{-my} \int_{-\infty}^y e^{mw} e^{kw} \dd w \lesssim e^{ky} = e^{-k|y|},\]
and when $y\geq 0$ we have
\[ e^{-my} \int_{-\infty}^y e^{mw} e^{-k|w|} \dd w \lesssim e^{-my}\left( \int_{-\infty}^0 e^{(m+k)w} \dd w + \int_0^y e^{(m-k)w} \dd w \right)  = e^{-k|y|},\]
since $k<m$. After applying a symmetric argument to the integral over $[y,\infty)$, we have shown \eqref{e:exp-bound}.

We now apply a contraction mapping argument in the $\|\cdot\|_{\sim}$ norm. With $\mathcal T$ defined as in \eqref{e:Udelta-integral}, the estimate \eqref{e:exp-bound} and $c(w) \lesssim e^{-k|w|}$ imply
\[ \left\|\xi \int_\R G(\cdot,w) c(w) \dd w\right\|_{\sim} \leq C |\xi| \|c(w)\|_{\sim} .\]
Because of our smallness assumption on $|\xi|\|c\|_{\sim}$, this term is bounded by $C_0\delta$ for some constant $C_0>0$. By \eqref{e:nonlinearity}, we have
\[ \|\mathcal T \eta\|_{\sim} \leq C_0 \delta + K \left\|\int_\R G(\cdot,w) \eta^2(w) \dd w\right\|_{\sim} \leq C_0 \delta + K \|\eta^2\|_{\sim} \leq C_0 \delta + K\|\eta\|_{\sim}^2,\]
and we may apply a contraction argument similar to above to conclude $\|U_\delta\|_{\sim}$ is finite, where $U_\delta$ is the solution constructed above in the space $X$.

To establish the bounds on $\partial_y U_\delta$ in the $\|\cdot\|_{\sim}$ norm, we first show
\[ \left\| \partial_y \int_\R G(y,w) \eta(w) \dd w\right\|_{\sim} \leq C\|\eta\|_{\sim},\]
following the proof of \eqref{e:exp-bound} exactly (since $Y_{\pm\infty}'(y) \lesssim e^{\mp m|y|}$). The estimate $\|U_\delta'\|_{\sim} \lesssim \delta$ now follows from differentiating \eqref{e:Udelta-integral} and proceeding as in \eqref{e:deriv-est}.

\subsection{Case 2: $0$ is an $L^2(\R)$-eigenvalue of $\mathcal L_{b,c}$}
This is the more delicate case, as we cannot proceed via a simple contraction as above because of the spectral obstruction. 

We define the weight
\[ \omega(y) = e^{-\int_0^y b(s) \dd s},\]
and inner product
 \[ \langle f,g\rangle = \int_\R \omega(y) f(y)g(y) \dd y.\]
 By an abuse of notation, we use $\langle f, g\rangle$ whenever this integral is well-defined, even if $f$ and $g$ are not both in $L^2(\R)$. By a quick calculation, one realizes that $\mathcal L_{b,c}$ is self-adjoint in the $\langle \cdot, \cdot\rangle$ inner product. If $0$ is an eigenvalue of $\mathcal L_{b,c,}$ (which must be simple, since we are working on the real line), this means the two solutions $Y_\infty, Y_{-\infty}$ to \eqref{e:Y-system} are parallel and can be chosen equal to each other after scaling by an appropriate constant. We let $Y= Y_\infty = Y_{-\infty}$, and normalize $Y$ so that $\langle Y,Y\rangle = 1$. From above, we have $\lim_{y\to \pm \infty} e^{-m|y|}Y(y) = c_\pm$ for some constants $c_\pm$, and $|Y(y)| \leq Ce^{-m|y|}$ for all $y$.

Let 
\[Y^\perp = \{f \in L^2(\R), \langle Y, f\rangle = 0\}.\] 
We also define the standard projection operators $P_Y f = \langle Y,f\rangle Y$ and $P_{Y^\perp} = f - P_Y f$. 
If there is a solution $U_\delta$ to \eqref{e:Udelta}, the self-adjointness of $\mathcal L_{b,c}$ implies \[0 = \langle Y, \mathcal L_{b,c} U_\delta\rangle = \langle Y, c\xi + \mathcal N(U_\delta)\rangle.\] Therefore, we search for solutions such that $c\xi + \mathcal N(U_\delta)\in Y^\perp$.  
The following lemma constructs an integral operator to invert $\mathcal L_{b,c}$, that is well-defined on $Y^\perp$.




\begin{lemma}\label{l:L-inverse}
As above, assume $Y$ is an eigenfunction of $\mathcal L_{b,c}$ with eigenvalue $0$. Then there exists an inverse operator $\mathcal L_{b,c}^{-1}:Y^\perp \to Y^\perp$ for $\mathcal L_{b,c}$, such that
\begin{equation}\label{e:inv-est}
 \|\mathcal L_{b,c}^{-1} \eta\|_{X} + \|\partial_y\mathcal L_{b,c}^{-1} \eta\|_X \leq C_0\|\eta\|_{X}, \quad \eta \in Y^\perp.
 \end{equation}
 If, in addition,  $\|\eta\|_{\sim} = \|e^{k|y|} \eta(y)\|_{L^\infty(\R)}< \infty$ for some $k\in (0,m)$, then
\begin{equation}\label{e:L-exp-bound}
 \|\mathcal L_{b,c}^{-1} \eta\|_{\sim} + \|\partial_y\mathcal L_{b,c}^{-1} \eta\|_{\sim} \leq C_0\|\eta\|_{\sim}.
 \end{equation}
\end{lemma}

\begin{proof}
We construct $\mathcal L_{b,c}^{-1}$ by solving, for given $\eta$, the equation $\mathcal L_{b,c} f = \eta$ on $[0,\infty)$ and $(-\infty,0]$ and patching the solutions together. The solution $f$ will be $C^1$ at $y=0$ exactly when $\eta$ is perpendicular to $Y$. In more detail, define
 \[ Z(y) := Y(y)\left[ 1 + \int_0^y \frac { e^{\int_0^w b(s) \dd s}}{Y(w)^2} \dd w\right], \quad y\in \R.\]
Note that $Z(0) = Y(0)$, $Z'(0) = Y'(0) + 1/Y(0)$,\footnote{By Sturm-Liouville theory, zeros of $Y$ are isolated, so we may assume $Y(0)\neq 0$ by shifting the variable $y$ by a small constant if necessary, which does not affect any of the hypotheses of Theorem \ref{t:near-constant}.} and $\mathcal L_{b,c} Z = 0$. The decay of $Y$ for large $|y|$ implies $|Z(y)|\leq Ce^{m|y|}$. By direct calculation, we also have $|Z'(y)| \leq C e^{m|y|}$.

 As above, the Wronskian $W_{Y,Z}$ of $Y$ and $Z$ will satisfy $W_{Y,Z}(y) = W_{Y,Z}(0) e^{\int_0^y b(s) \dd s} = e^{\int_0^y b(s) \dd s}$. For $y\in [0,\infty)$, we use the Green's function
 \[ G_+(y,w) : = \frac {1}{W_{Y,Z}(w)} \begin{cases} Y(y) Z(w) , &0\leq w<y,\\ Y(w) Z(y), &0\leq w\geq y.\end{cases}
\] 
Let us derive bounds for this Green's function in the norm $\|\cdot\|_{X_+} = \|\cdot\|_{L^1(\R_+)} + \|\cdot\|_{L^\infty(\R_+)}$. For any $\eta \in X_+$, we have
\[\begin{split}
 \int_0^\infty G_+(y,w) \eta(w) \dd w &\leq C \left( Y(y) \int_0^y Z(w) \eta(w) \dd w + Z(y)\int_y^\infty Y(w) \eta(w) \dd w\right)\\
&\leq C\|\eta\|_{L^\infty(\R_+)} \left( e^{-my} \int_0^y e^{mw} \dd w + e^{my} \int_y^\infty e^{-mw} \dd w \right)\\
&\leq C\|\eta\|_{L^\infty(\R_+)},
\end{split}\]
and
\[\begin{split}
\int_0^\infty &\left| \int_0^\infty G_+(y,w) \eta(w) \dd w \right| \dd y\\
 &\leq C \int_0^\infty\left( |Y(y)| \int_0^y |Z(w) \eta(w) |\dd w + |Z(y)|\int_y^\infty |Y(w)\eta(w)| \dd w\right)\dd y\\
&\leq C \int_0^\infty \left( \int_0^y e^{m(w-y)} |\eta(w)|\dd w + \int_y^\infty e^{m(y-w)} |\eta(w)| \dd w \right)\dd y\\
&\leq C\int_0^\infty \int_0^\infty |\eta(w)| e^{-m|y-w|}\dd y \dd w\\
&\leq C\|\eta\|_{L^1(\R_+)},
\end{split}\]
and we see 
\[\left\|\int_0^\infty G_+(y,w) \eta(w) \dd w \right\|_{X_+} \leq C \|\eta\|_{X_+}.\]
By direct calculation, $\mathcal L_{b,c} \int_0^\infty G_+(y,w) \eta(w) \dd w = \eta(y)$ for $y\in [0,\infty)$ and $\eta\in X_+$. Next, note that $Y'$ and $Z'$ share the same decay properties as $Y$ and $Z$ respectively, so a similar calculation after taking a $y$-derivative gives
\[ \left\|\partial_y \int_0^\infty G_+(y,w) \eta(w) \dd w\right\|_{X_+} \leq C\|\eta\|_{X_+}.\]

On the half-line $(-\infty,0]$, we use the Green's function
 \[ G_-(y,w) : = -\frac {1}{W_{Y,Z}(w)} \begin{cases} Y(y) Z(w) , &y<w\leq 0,\\ Y(w) Z(y), &w\leq y\leq 0.\end{cases}
\] 
Defining $\|\cdot \|_{X_-} = \|\cdot\|_{L^\infty(\R_-)} + \|\cdot \|_{L^1(\R_-)}$, calculations similar to above show $\mathcal L_{b,c} \int_{-\infty}^0 G_-(y,w) \eta(w) \dd w = \eta(y)$ for $y\in (-\infty,0]$, and 
\[\left\|\int_{-\infty}^0 G_-(y,w) \eta(w) \dd w \right\|_{X_-} +\left\|\partial_y\int_{-\infty}^0 G_-(y,w) \eta(w) \dd w \right\|_{X_-}  \leq C \|\eta\|_{X_-}.\]

Define 
\[(\mathcal M \eta)(y) = \begin{cases} \displaystyle\int_0^\infty G_+(y,w) \eta(w) \dd w, & y\geq 0,\\
\displaystyle\int_{-\infty}^0 G_-(y,w) \eta(w) \dd w, &y<0.\end{cases} \]
Now, if $\eta$ satisfies the orthogonality condition $\langle Y, \eta\rangle = \int_\R e^{-\int_0^y b(s) \dd s} Y(y) \eta(y) \dd y = 0$, then
\[ \begin{split}
\int_0^\infty G_+(0,w) \eta(w) \dd w &= \int_0^\infty \frac 1 {W_{Y,Z}(w)} Y(w) \eta(w) \dd w\\
&= \int_0^\infty e^{-\int_0^w b(s) \dd s} Y(w) \eta(w) \dd w\\
&= -\int_{-\infty}^0 e^{-\int_0^w b(s) \dd s} Y(w) \eta(w) \dd w\\
&= \int_{-\infty}^0 G_-(0,w) \eta(w) \dd w,
\end{split}
\]
and $\mathcal M\eta$ is continuous across $y=0$. A similar calculation after taking a $y$-derivative shows that $\mathcal M\eta$ is $C^1$ across $y=0$. From our estimates for $G_+$ and $G_-$, it is also clear that
\begin{equation}\label{e:M-est}
 \|\mathcal M \eta\|_{X} + \|\partial_y \mathcal M \eta\|_X \leq C\|\eta\|_{X}, \quad \eta \in Y^\perp.
 \end{equation}

The above calculations do not necessarily imply $\mathcal M\eta \in Y^\perp$ when $\eta \in Y^\perp$, so we define $\mathcal L_{b,c}^{-1} = P_{Y^\perp} \mathcal M$. With this definition, we have 
\[\mathcal L_{b,c} \mathcal L_{b,c}^{-1} \eta = \mathcal L_{b,c} ( \mathcal M\eta - \langle Y, \mathcal M\eta\rangle Y) = \eta, \]
since $\mathcal L_{b,c} Y = 0$. The projection $P_{Y^\perp}$ is bounded on $L^2(\R)$, and since $Y$ lies in the space $X$ (in fact, it is bounded and exponentially decaying, as explained above), the estimate \eqref{e:inv-est} follows from \eqref{e:M-est}.

Finally, if $\|\eta\|_{\sim}<\infty$, then a direct calculation that is essentially the same as the proof of \eqref{e:exp-bound} applied to $G_+$ and $G_-$ establishes the bound \eqref{e:L-exp-bound}. We omit the details.
\end{proof}

 We would like to convert the equation $\mathcal L_{b,c} U_\delta = c\xi + \mathcal N(U_\delta)$ into integral form, as in Case 1. However, for a general $U_\delta\in L^2(\R)$, it is not at all clear that $c\xi + \mathcal N(U_\delta)$ lies in $Y^\perp$, which is required for our inverse operator $\mathcal L_{b,c}^{-1}$ to be well-behaved. Proceeding formally for the moment, we write $U_\delta = \eta + \alpha Y$, where $\eta\in Y^\perp$ and $\alpha \in \R$. Our equation becomes
 \[ \mathcal L_{b,c} \eta = c\xi + \mathcal N(\eta + \alpha Y),\]
since $\mathcal L_{b,c}Y = 0$. This suggests the following scheme: for given $\eta\in Y^\perp$, choose $\alpha(\eta)$ such that $c\xi + \mathcal N(\eta +\alpha(\eta) Y) \in Y^\perp$. If such an $\alpha(\eta)$ exists, then we may define the integral operator 
\[ \tilde{\mathcal T}\eta := \mathcal L_{b,c}^{-1}[c\xi + \mathcal N(\eta + \alpha(\eta)Y)],\]
and seek a fixed point $\eta$ for $\tilde{\mathcal T}$. 

To define the class of functions in which $\eta$ will live, first we claim that the quantity $\langle Y,Y^3\rangle$ is bounded below by a positive constant $c_1$ that is independent of $\delta$. Indeed, since $Y\leq Ce^{-m|y|}$, there exists $M>0$ depending on $C$ and $m$ such that $\int_{|y|>M} \omega Y^2 \dd y \leq \frac 1 2$. We recall that $C$ depends on $b$ and $F$, but can be taken independent of $c(y)$ and therefore of $\delta$. 

Next, since $\langle Y, Y\rangle = 1$, we have
\[ \frac 1 2 \leq \int_{-M}^M \omega Y^2 \dd y \leq \left(\int_{-M}^M \omega Y^4 \dd y\right)^{1/2} (2M)^{1/2}.\]
This implies $\langle Y, Y^3\rangle \geq \int_{-M}^M \omega Y^4 \dd y\geq c_1>0$, with $c_1$ depending on $C$ and $m$, as claimed.

Now, let $K_0>1$ be such that
\begin{equation}\label{e:K0}
 \left|\frac {24 \langle Y, c\xi\rangle}{F^{(4)}(\xi)c_1} \right|\leq K_0 \delta.
 \end{equation}
 The key point is that $K_0$ is chosen independently of $\delta$. With $C_0$ the constant from Lemma \ref{l:L-inverse}, let $K_1>1$ be such that
\begin{equation}\label{e:K1}
C_0\left(\left\|c\xi \right\|_X + \frac 1 3  K_0\delta \|F\|_{C^4(\R)}\left\|  Y\right\|_{L^\infty}\right) \leq K_1\delta.
\end{equation}
 The choices of $K_0$ and $K_1$ will be justified below.
 
Next, define the class
\[ \tilde{\mathcal A}_\delta := \{ \eta: \|\eta\|_X \leq 2K_1 \delta\}.\]
Our next lemma shows that $\alpha(\eta)$ exists and can be chosen uniquely, for any $\eta\in \tilde{\mathcal A}_\delta$.
\begin{lemma}\label{l:orthogonality}
If $\delta>0$ is sufficiently small (depending on $K_0$, $K_1$, and the potential $F$) then for any $\eta \in \tilde{\mathcal A}_\delta$, there is a unique $\alpha\in [-(2K_0\delta)^{1/3}, (2K_0\delta)^{1/3}]$ such that 
\begin{equation}\label{e:g-alpha}
\begin{split}
 0 &= \langle Y, c\xi + \mathcal N(\eta + \alpha Y)\rangle = \langle Y, c\xi\rangle + \langle Y, F''(\xi) ( \eta + \alpha Y) - F'(\xi+\eta + \alpha Y) \rangle.
\end{split}
\end{equation}
The map $\eta \mapsto \alpha$ is continuous in the following sense: for any sequence $\{\eta_k\}\subset \tilde{\mathcal A}_\delta$ such that $\eta_k \to \eta_0$ a.e., there holds $\alpha(\eta_k) \to \alpha(\eta_0)$.
\end{lemma}
\begin{proof}
%
%

%
%
%

We rewrite formula \eqref{e:g-alpha}, using $F'''(\xi) = 0$, as
\begin{equation}\label{e:poly}
\begin{split}
0 &= \langle Y, c\xi\rangle - \frac 1 6 \langle Y, F^{(4)}(z_y)(\eta + \alpha Y)^3\rangle\\
&=  \langle Y, c\xi\rangle - \frac 1 6 \langle Y, F^{(4)}(z_y)\eta^3\rangle - \frac 1 2 \alpha \langle Y, F^{(4)}(z_y) Y \eta^2\rangle\\
&\quad   -\frac 1 2 \alpha^2 \langle Y, F^{(4)}(z_y) Y^2 \eta\rangle -\frac 1 6 \alpha^3 \langle Y,F^{(4)}(z_y) Y^3\rangle ,
\end{split}
\end{equation}
where $z_y \in \R$ satisfies $|z_y - \xi|\leq |\eta +\alpha Y| \lesssim K_1\delta + K_0 \delta^{1/3}$. 
Choosing $\delta$ small enough, depending on $K_0$, $K_1$, and $\|F\|_{C^4(\R)}$, we ensure 
\[\frac 1 2 F^{(4)}(\xi) \leq F^{(4)}(z_y) \leq 2 F^{(4)}(\xi),\]
for all $y$. 

Denoting the right-hand side of \eqref{e:poly} by $g(\alpha)$, we want to select an interval on which $g$ has a sign change. 
For $\eta, \alpha$ with $|\alpha|\leq (2K_0 \delta)^{1/3}$ and $\|\eta\|_X \leq 2K_1  \delta$, we have, using the definition \eqref{e:K0} of $K_0$,
\begin{equation}\label{e:rhs}
 \begin{split}
\frac 6 {|\langle Y, F^{(4)}(z_y) Y^3\rangle|}& \left| \langle Y,c\xi\rangle - \frac 1 6 \langle Y, F^{(4)}(z_y)\eta^3\rangle - \frac 1 2 \alpha \langle Y, F^{(4)}(z_y) Y \eta^2\rangle  -\frac 1 2 \alpha^2 \langle Y, F^{(4)}(z_y) Y^2 \eta\rangle\right| \\
&\leq\frac 1 2 K_0\delta + C\left((K_1\delta)^3 + K_0^{1/3} K_1^2\delta^{7/3} + K_0^{2/3} K_1\delta^{5/3}\right)
\end{split}
\end{equation}
for a constant $C$ depending on $\|F\|_{C^4(\R)}$ and $\|Y\|_{L^\infty(\R)}$. 
If we impose the condition
\[\delta< \frac{ K_0^{1/2}}{(6C K_1)^{3/2}},\] 
then the last expression in \eqref{e:rhs} is bounded above by 
\[ \frac 1 2 K_0 \delta(1 + C \delta) < K_0\delta,\]
if we additionally impose $\delta < 1/C$.  
Therefore, for $\delta$ satisfying our constraints and $\alpha = \pm (2K_0 \delta)^{1/3}$, the $\alpha^3$ term in $g(\alpha)$ is strictly larger in magnitude than the other terms combined, and $g$ changes sign on the interval
\[ I_\delta := [-(2K_0\delta)^{1/3}, (2K_0\delta)^{1/3}],\]
as claimed.

Next, we claim that $g$ is monotonic on $I_\delta$. First, assume $\{y :\eta(y)\neq0\}$ has positive measure. Note that
\begin{equation}\label{e:g-prime}
\begin{split}
g'(\alpha) &= -\frac 1 2 \langle Y, F^{(4)}(z_y) Y\eta^2\rangle - \alpha \langle Y, F^{(4)}(z_y) Y^2\eta\rangle - \frac 1 2 \alpha^2 \langle Y, F^{(4)}(z_y) Y^3\rangle,\\
&= -\frac 1 2 \langle Y, F^{(4)}(z_y) Y (\eta+\alpha Y)^2\rangle,
\end{split}
\end{equation}
which is never zero unless $\eta = -\alpha Y$ almost everywhere, which is impossible since $\eta$ is not zero almost everywhere, and $\eta\in Y^\perp$. We conclude $g$ is monotonic and therefore has a unique root in $I_\delta$.

On the other hand, if $\eta = 0$ almost everywhere, the unique root for $g(\alpha)$ is explicitly given by 
\[ \alpha = \left[\frac{6\langle Y, c\xi\rangle}{\langle Y, F^{(4)}(z_y) Y^3\rangle}\right]^{1/3},\]
and we have $|\alpha|\leq (2K_0\delta)^{1/3}$ in this case as well.

For the continuity, let $\{\eta_k\}$ be a sequence in $\tilde{\mathcal A}_\delta$ with $\eta_k\to \eta_0$ a.e., as in the statement of the lemma. Since $Y$ decays exponentially and $\eta_k$ are uniformly bounded in $L^\infty$, dominated convergence implies
\begin{equation}\label{e:eta-k}
\begin{split}
 \langle Y,F^{(4)}(z_y) \eta_k^3\rangle &\to \langle Y,F^{(4)}(z_y) \eta_0^3\rangle,\\
 \langle Y, F^{(4)}(z_y) Y \eta_k^2\rangle &\to   \langle Y, F^{(4)}(z_y) Y \eta_0^2\rangle,\\
  \langle Y, F^{(4)}(z_y) Y^2\eta_k\rangle &\to   \langle Y, F^{(4)}(z_y) Y^2\eta_0\rangle.
\end{split} 
\end{equation}
Considering the two cases above, if $\{y:\eta_0(y)\neq 0\}$ has positive measure, then the same is true of $\eta_k$ for $k$ sufficiently large. For each $k$, $\alpha_k$ is the unique simple root in the interval $I_\delta$, and simple roots of polynomials depend continuously on the coefficients. We conclude from \eqref{e:eta-k} that $\alpha(\eta_k) \to \alpha(\eta_0)$. On the other hand, if $\eta_0(y) = 0$ almost everywhere, \eqref{e:eta-k} and the uniform bounds $|\alpha_k|\leq (2K_0\delta)^{1/3}$ imply
\[\begin{split}
 \alpha_k &= \left(\frac 1 {\langle Y,F^{(4)}(z_y) Y^3\rangle} \left[ 6 \langle Y, c\xi\rangle - \langle Y, F^{(4)}(z_y)\eta_k^3\rangle \right.\right.\\
 &\left.\left. \qquad \qquad- 3\alpha_k \langle Y, F^{(4)}(z_y) Y \eta_k^2\rangle - 3\alpha_k^2 \langle Y, F^{(4)}(z_y) Y^2 \eta_k\rangle  \right]\right)^{1/3}\\
 &\to \left(\frac {6 \langle Y, c\xi\rangle} {\langle Y,F^{(4)}(z_y) Y^3\rangle} \right)^{1/3} = \alpha_0,
 \end{split}\]
 as claimed.
\end{proof}


With $K_0>0$ as in \eqref{e:K0}, define the integral operator 
 \[ \tilde {\mathcal T}: \eta \mapsto \mathcal L_{b,c}^{-1}\left[ c\xi +\mathcal N(\eta+\alpha(\eta) Y)\right],  \qquad \|\eta\|_X \leq 2K_1 \delta,\]
 where $\alpha = \alpha(\eta)$ is the number provided by Lemma \ref{l:orthogonality}. Since $c\xi + \mathcal N(\eta + \alpha Y) \in Y^\perp$, Lemma \ref{l:L-inverse} implies
\[\left\|\tilde{\mathcal T}\eta\right\|_{X} \leq C_0\left\|c\xi+ \mathcal N(\eta+\alpha Y)\right\|_X,\]  
 for a constant $C_0>0$ independent of $\eta$ and $\delta$. Rewriting $\mathcal N(\eta + \alpha Y) =  -F'(\xi+\eta + \alpha Y) - F''(\xi) (\eta+\alpha Y)$ using $F'''(\xi) = 0$, we have, for some function $h:\R\to \R$ depending on $\eta$ and $\alpha$, 
 \begin{equation}\label{e:T-ineq}
  \begin{split}
\left\| \tilde{\mathcal T} \eta\right\|_X &\leq C_0 \left\|c\xi  - \frac 1 6 F^{(4)}(h)(\eta+\alpha Y)^3\right\|_X\\
 &\leq  C_0\left\|c\xi  - \frac 1 6 F^{(4)}(h) \alpha^3 Y^3\right\|_X +C_0\left\|\frac 1 6 F^{(4)}(h)(\eta^3+3\eta^2 \alpha + 3 \eta \alpha^2)\right\|_X.
 \end{split}
 \end{equation} 
Since $\eta \in \tilde{\mathcal A}_\delta$, Lemma \ref{l:orthogonality} implies $|\alpha|\leq (2K_0 \delta)^{1/3}$. Therefore, for the first term on the right in \eqref{e:T-ineq}, we have
\begin{equation}\label{e:C0ref}
C_0\left\|c\xi  - \frac 1 6 F^{(4)}(h) \alpha^3 Y^3\right\|_X \leq K_1 \delta,
\end{equation}
by our definition \eqref{e:K1} of $K_1$.  With \eqref{e:T-ineq}, we now have
\[ \|\tilde{\mathcal T} \eta\|_X \leq K_1 \delta + CC_0 \left(K_1^3\delta^3 + K_0^{1/3}K_1^2 \delta^{7/3} + K_0^{2/3}K_1 \delta^{5/3}\right) .\] 
Now we impose the further condition
\[ \delta < \frac 1 {(6CC_0)^{3/2} K_0 K_1},\]
which ensures 
\[ \|\tilde{\mathcal T}\eta\|_X \leq 2K_1 \delta,\quad \eta\in \tilde{\mathcal A}_\delta,\]
or equivalently, $\tilde{\mathcal T}$ maps $\tilde{\mathcal A}_\delta$ into itself.

The map $\tilde {\mathcal T}$ is not necessarily a contraction on $\tilde {\mathcal A_\delta}$, but we can still show that iterates of $\tilde {\mathcal T}$ converge: define
\begin{equation}\label{e:eta-k-def}
 \begin{split}
\eta_0(y) &\equiv 0,\\
\eta_{k+1}(y) &= \tilde {\mathcal T}(\eta_k)(y) = \mathcal L_{b,c}^{-1}\left[ c\xi +\mathcal N(\eta_k+\alpha(\eta_k) Y)\right], \quad k= 0,1,2,\ldots.
\end{split} 
\end{equation}
Our work above shows that $\eta_k\in \tilde{\mathcal A}_\delta$ for all $k$. From Lemma \ref{l:L-inverse}, we have
\[\begin{split}
 \|\eta_{k+1}\|_X + \|\partial_y \eta_{k+1}\|_X &\leq C \left\|\mathcal L_{b,c}^{-1} \left[ c\xi +\mathcal N(\eta_k+\alpha(\eta_k) Y)\right]\right\|_X \\
 &\leq C \left(\|c\xi \|_X+ \|\mathcal N(\mathcal \eta_k + \alpha(\eta_k)Y)\|_X\right)\\
 &\leq C \delta,
 \end{split}\]
exactly as in \eqref{e:T-ineq}, since $\eta_k \in \tilde{\mathcal A}_\delta$. This uniform $C^1$ estimate shows that $\{\eta_k\}$ is a precompact sequence, and some subsequence converges uniformly on compact subsets of $\R$ to a function $\eta_0 \in \tilde{\mathcal A}_\delta$. 
In particular, Lemma \ref{l:orthogonality} implies $\alpha(\eta_k) \to \alpha(\eta_0)$. 
Returning to the defining equation for $\eta_{k+1}$, let $G_+$ and $Z$ be as in the proof of Lemma \ref{l:L-inverse}. For $y\geq 0$, we have
\[\begin{split}
 \eta_{k+1}(y) &= \int_0^\infty G_+(y,w) \left[c\xi + \mathcal N(\eta_k+\alpha(\eta_k) Y)\right](w) \dd w\\
 &=  Y(y) \int_0^y \frac{Z(w)}{W_{Y,Z}(w)} \left[c\xi + \mathcal N(\eta_k+\alpha(\eta_k) Y)\right](w) \dd w \\
 &\quad + Z(y)\int_y^\infty \frac{Y(w)}{W_{Y,Z}(w)} \left[c\xi + \mathcal N(\eta_k+\alpha(\eta_k) Y)\right](w) \dd w
 \end{split} \]
On the left side of this equality, we clearly have $\eta_{k+1}(y)\to \eta_0(y)$ for each $y$. We can take the limit in the right side via dominated convergence because $\alpha(\eta_k)\to \alpha(\eta_0)$, and because of the bounds $|Y|\leq Ce^{-m|y|}$, $|Z|\leq Ce^{m|y|}$, and $\mathcal N(\eta_k + \alpha(\eta_k)Y) \leq C|\eta_k +\alpha(\eta_k)Y|^3 \leq C(2K_1 \delta + (2K_0\delta)^{1/3})^3$. The same argument with $G_-$ replacing $G_+$ applies when $y<0$. We conclude
\begin{equation}\label{e:fixed-point}
 \eta = \mathcal L_{b,c}^{-1} [c\xi + \mathcal N(\eta + \alpha(\eta)Y)].
 \end{equation}
 By direct calculation, this right-hand side is differentiable in $y$, and we conclude $\eta\in C^1$. Since $\mathcal L_{b,c} Y = 0$, we have
\[ \mathcal L_{b,c}(\eta + \alpha(\eta)Y) = c\xi + \mathcal N(\eta+\alpha(\eta)Y),\]
and letting $U_\delta = \eta + \alpha(\eta) Y$, we have constructed a solution to \eqref{e:Udelta}.

 Next, under the additional assumption that $c$ has exponential decay proportional to $e^{-k|y|}$ for some $k < m$, we return to our definition of the class $\tilde{\mathcal A}_\delta$ and replace $K_1$ from \eqref{e:K1} with a constant $\tilde K_1$ such that 
 \[ C_0\left( \|c\xi\|_{\sim} + \frac 1 3 K_0 \delta \|F\|_{C^4(\R)} \|Y\|_{\sim}\right) \leq \tilde K_1 \delta.\]
(Recall that $|Y(y)|\leq C e^{-m|y|}$.) Letting $\tilde {\mathcal A}_{\sim} = \{\eta: \|\eta\|_{\sim} \leq 2\tilde K_0 \delta\}$, arguing exactly as in \eqref{e:C0ref} shows that $\tilde{\mathcal T}$ maps $\tilde{\mathcal A}_{\sim}$ into itself. Therefore, the iterates $\eta_k$ defined in \eqref{e:eta-k-def} satisfy a uniform bound in the $\|\cdot\|_{\sim}$, which is passed to $\eta_0$ via pointwise convergence. The exponential decay of $\eta'$ follows from differentiating formula \eqref{e:fixed-point}. This completes the proof of Theorem \ref{t:near-constant}.

\section{Proof of stability}

\subsection{Orbital stability}


It is more or less standard that, if $b$, $c$, and $F$ are such that the energy functional $E(u(t))$ controls the $H^1(\R)\times L^2(\R)$ norm of $u(t)$, solutions starting close to a constant state $u \equiv \xi$ exist for all time and remain close to $\xi$. We give a proof for the convenience of the reader.

\begin{proposition}\label{p:orbital-constant}
Assume there exists $\mu>0$ such that 
\[c(y)\leq F''(\xi) -\mu, \quad y\in \R.\] 
Then there exist $C,\eps_0>0$ such that if $\|u(0)-\xi\|_{H^1(\R)} + \|\partial_t u(0)\|_{L^2(\R)} < \eps$ with $\eps\leq \eps_0$, then the corresponding solution $u(t,x)$ exists for all time, with 
\[\|u(t)\|_{H^1(\R)} + \|\partial_t u(t)\|_{L^2(\R)} \leq C\eps.\]
\end{proposition}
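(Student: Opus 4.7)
The plan is a standard energy-based orbital-stability argument, run via a continuity/bootstrap in time. First I would invoke local well-posedness for \eqref{e:main-y} in $H^1(\R)\times L^2(\R)$, which is standard for a semilinear wave equation with bounded coefficients and a smooth nonlinearity $F$. To deal with the possibility that $\xi$ itself may fail to be a stationary solution of \eqref{e:main-y} (when $c\not\equiv 0$ and $\xi\neq 0$), I would introduce $v := u - U$, where $U$ is the near-constant stationary state produced by Theorem \ref{t:near-constant}. The bounds there give $\|U-\xi\|_{H^1(\R)}\lesssim \delta$, so $\|u(0)-\xi\|_{H^1}+\|\partial_t u(0)\|_{L^2}<\eps$ translates into $\|v(0)\|_{H^1}+\|\partial_t v(0)\|_{L^2}\lesssim \eps$ after absorbing the $\delta$ contribution (or, in the degenerate case $U\equiv \xi$, trivially).

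Next I would expand the conserved energy \eqref{e:energy} around $U$. Because $U$ is a critical point of $E$, the linear-in-$v$ terms drop out, yielding
\[
E(u) - E(U) = \int_\R \left[\tfrac{1}{2}(\partial_t v)^2 + \tfrac{1}{2}(\partial_y v)^2 + \tfrac{1}{2}\bigl(F''(U(y))-c(y)\bigr)v^2 + R(y,v)\right]\omega(y)\,\dd y,
\]
where $|R(y,v)|\lesssim |v|^3$ uniformly in $y$ on any bounded range of $v$, using $F\in C^3$. Since $b\in L^1(\R)$, the weight $\omega(y)=\exp\int_{-\infty}^y b$ is pinched between two positive constants, so the $\omega$-weighted and unweighted $L^2$ norms are equivalent.

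The crux is coercivity of the quadratic form. The standing hypothesis $c(y)\leq F''(\xi)-\mu$, together with $\|U-\xi\|_{L^\infty}\lesssim \delta$ and continuity of $F''$, gives $F''(U(y))-c(y)\geq \mu/2$ once $\delta$ is small enough. Combined with the two-sided bound on $\omega$, this shows the quadratic part of the expansion dominates $\|v\|_{H^1}^2+\|\partial_t v\|_{L^2}^2$ from below by a fixed positive multiple. This coercivity step is the only real obstacle in the argument, and it is exactly what the assumption on $c$ is designed to provide.

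Finally, setting $N(t)^2 := \|v(t)\|_{H^1}^2+\|\partial_t v(t)\|_{L^2}^2$, Sobolev embedding $H^1(\R)\hookrightarrow L^\infty(\R)$ gives $\int_\R |R|\,\omega\,\dd y\lesssim \|v\|_{L^\infty}\|v\|_{L^2}^2 \lesssim N(t)^3$. Combining this with coercivity and conservation of $E$, I would derive an inequality of the form
\[
c_* N(t)^2 \leq C_0\eps^2 + C_1 N(t)^3.
\]
A standard bootstrap argument, opened by $N(0)\lesssim \eps$, then yields $N(t)\leq C\eps$ on the maximal interval of existence, and the usual blow-up criterion upgrades this to global existence, completing the proof.
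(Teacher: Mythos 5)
Your proposal has the same skeleton as the paper's proof: conservation of the energy \eqref{e:energy}, coercivity of the quadratic part supplied by the hypothesis $c\leq F''(\xi)-\mu$, control of the cubic remainder via the Sobolev embedding $H^1(\R)\subset L^\infty(\R)$ and $F\in C^3$, equivalence of the $\omega$-weighted and unweighted norms since $b\in L^1$, and a continuity/bootstrap argument combined with local well-posedness to get global existence. The difference is the center of the expansion: the paper never introduces the stationary state $U$. It runs the energy argument directly on the deviation from the constant $\xi$ (upper bound $E\leq C_1(\|u\|_{H^1}^2+\|\partial_t u\|_{L^2}^2)$, lower bound $\|u\|_{H^1}^2+\|\partial_t u\|_{L^2}^2\leq C_2 E$ valid while $\|u\|_{H^1}\leq r_0$, then a first-exit-time contradiction), using only $F(\xi)=F'(\xi)=0$ and $c\leq F''(\xi)-\mu$; the orbital stability of the near-constant states is then deduced afterwards as a remark, not used as an input.

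Routing the proof through $U$ creates two genuine problems. First, Theorem \ref{t:near-constant} is not available under the hypotheses of Proposition \ref{p:orbital-constant}: it additionally requires $F''(\xi)>0$ and either $|\xi|\leq\delta$ or $\|c\|_{L^1}+\|c\|_{L^\infty}\leq\delta$ with $\delta$ small. Under the Proposition's sole assumption $c\leq F''(\xi)-\mu$ (for instance $\xi$ and $c$ both large, or even $F''(\xi)\leq 0$ with $c$ very negative), no stationary state $U$ is guaranteed to exist, so your argument does not get off the ground, whereas the paper's direct argument needs none of this. Second, the step ``absorbing the $\delta$ contribution'' is not legitimate: $\|v(0)\|_{H^1}\leq\|u(0)-\xi\|_{H^1}+\|U-\xi\|_{H^1}\lesssim\eps+\delta$, and $\delta$ is fixed by the coefficients while $\eps$ is the small parameter, so $\eps+\delta$ is not $O(\eps)$. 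Your bootstrap then yields only $\|u(t)-U\|_{H^1}+\|\partial_t u(t)\|_{L^2}\lesssim\eps+\delta$, hence distance $\lesssim\eps+\delta$ from $\xi$, which is weaker than the stated conclusion $\leq C\eps$ (and does not shrink as $\eps\to0$). In the two situations where $U\equiv\xi$ (namely $c\equiv0$, or $\xi=0$) both issues disappear and your proof reduces to the paper's; moreover your expansion around $U$, with the linear terms vanishing because $U$ is a critical point of $E$, is exactly the natural route to orbital stability of the near-constant states themselves, and the remaining steps (coercivity $F''(U)-c\geq\mu/2$ for small $\delta$, $|R|\lesssim|v|^3$, the bootstrap inequality) are sound. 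But as a proof of the Proposition as stated, the reliance on Theorem \ref{t:near-constant} and the $\eps$ versus $\eps+\delta$ discrepancy are real gaps.
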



\begin{proof}
Recall that the energy \eqref{e:energy}  
is conserved along the flow of \eqref{e:main-y}. Since $c\in L^\infty$ and $F\in C^2$, we have 
\begin{equation}\label{e:C1}
  E(u(t))  \leq C_1 (\|u(t)\|_{H^1(\R)}^2 + \|\partial_t u(t)\|_{L^2(\R)}^2),
  \end{equation}
for any $t$ in the time domain of $u$. On the other hand, a Taylor expansion for $F$ shows that
\[ \frac 1 2 (F(u) - cu^2) \geq \frac 1 2(F''(\xi)-c)u^2 - K u^3 \geq \frac \mu 2 u^2 - K u^3,\]
for any $y\in \R$, where $K>0$ depends on the $C^3$ norm of $F$. By Sobolev embedding, $\|\psi\|_{L^\infty}\leq C_0\|\psi\|_{H^1}$, so for any $t$ with $\|u(t)\|_{H^1}\leq r_0 := \mu/(2C_0K)$, we have
\begin{equation}\label{e:C2}
\|u(t)\|_{H^1(\R)}^2 + \|\partial_t u(t)\|_{L^2(\R)}^2 \leq C_2 E(u(t)), \quad \text{ if } \|u(t)\|_{H^1(\R)} \leq r_0.
\end{equation}

Now, let $\eps_0 := \sqrt{ r_0/(1+C_1C_2)}$, and assume $\|u(0)\|_{H^1(\R)} + \|\partial_t u(0)\|_{L^2(\R)} < \eps$ for $\eps< \eps_0$, as in the statement of the theorem. Let $E_0 = E(u_0)$ be the energy of $(u(0),\partial_t u(0))$. Thanks to \eqref{e:C1} applied at $t=0$, we must have $E_0 \leq C_ 1\eps^2$. By a standard fixed-point argument, a solution to \eqref{e:main-y} exists on $[0,T]\times\R$ for some $T>0$ depending only on $\eps$. 


We claim $\|u(t)\|_{H^1(\R)} + \|\partial_t u(t)\|_{L^2(\R)} < r_0$ for all $t\in [0,T]$. If not, let $t_0$ be the first time where $\|u(t_0)\|_{H^1(\R)} = r_0$. By energy conservation and \eqref{e:C2}, we have
 \[ \| u(t_0)\|_{H^1}^2 + \|\partial_t u(t_0)\|_{L^2}^2 \leq C_2 E_0 \leq C_1 C_2 \eps^2 < r_0,\]
a contradiction.

Therefore, the solution can be extended up to a time $T + T_1$, with $T_1>0$ depending on $r_0$. Repeating this argument, we extend the solution for all time.
\end{proof}

We can also address the orbital stability of the near-constant states $U(y) = \xi + U_\delta(y)$ constructed in Theorem \ref{t:near-constant}, in the case $F''(\xi) >0$. Using $F(\xi) = F'(\xi) = 0$ and $\|U_\delta\|_X + \|\partial_y U_\delta\|_X \lesssim \delta$, it is not hard to see that the energy of $U(y)$ is finite and bounded above by a constant times $\delta$. 

Furthermore, if $\delta>0$ is sufficiently small, then initial data which is $\delta$-close to $(\xi,0)$ in the $H^1\times L^2$ norm also satisfies the hypotheses of Proposition \ref{p:orbital-constant}. This implies near-constant states are also orbitally stable, since solutions staying close to the constant state $\xi$ are also $\delta$-close to $U(y)$.

\subsection{Asymptotic stability} Next, we address the much more nontrivial property of asymptotic stability.




For the time being, we let $U(y)$ denote either the constant state $U\equiv \xi$ or the near-constant steady state guaranteed by Theorem \ref{t:near-constant}. Let $v(t,y) = u(t,y)-U(y)$ be the perturbation. By Proposition \ref{p:orbital-constant} (orbital stability), taking $v(0)$ less than $\eps$ in the $H^1\times L^2$ norm is enough to ensure
\[ \|v(t)\|_{H^1} + \|\partial_t v(t)\|_{L^2} \lesssim \eps, \quad t\in [0,\infty).\]
We see that $v$ satisfies
\[ \partial_t^2 v - [\partial_y^2 v + b\partial_y v + cv] = F'(U) - F'(U+v) =  \mathcal N(U,v),\]
with $\mathcal N(U,v) = F'(U)-  F'(U+v)$. 
For convenience, we rewrite this as a first-order system, with $(v_1,v_2) = (v,\partial_t v)$:
\begin{equation}\label{e:vector}
\begin{split}
\partial_t v_1 &= v_2,\\
\partial_t v_2 &= \partial_y^2 v_1 + b\partial_y v_1 + cv_1 +\mathcal N(U,v_1).
\end{split}
\end{equation}

Let $\psi(y) = \lambda\tanh(y/\lambda)$ for some $\lambda>0$ to be chosen later. Define the Virial functional 
\begin{equation}\label{e:Idef}
 \mathcal I(v) := \int_\R \psi(\partial_y v_1) v_2 \dd y + \frac 1 2 \int_\R \psi' v_1 v_2 \dd y= \int_\R\left( \psi \partial_y v_1 + \frac 1 2 \psi' v_1\right) v_2 \dd y.
 \end{equation}
For any $h\in H^1(\R)$, we have the identity
\[ \int_\R \left(\psi \partial_y h + \frac 1 2 \psi' h\right) h \dd y = 0.\]
Using this identity with $h = v_2$, we differentiate $\mathcal I(v)$ for $(v_1,v_2)$ a solution of \eqref{e:vector}, and integrate by parts several times:
\begin{equation}\label{e:ddtI}
\begin{split}
\frac d {dt} \mathcal I(v) &= \int_\R \left(\psi \partial_y v_1 + \frac 1 2 \psi' v_1\right) \partial_t^2 v_1 \dd y \\
&= \int_\R \left(\psi \partial_y v_1 + \frac 1 2 \psi' v_1\right) \left(\partial_y^2 v_1 + b\partial_y v_1 + c v_1 +\mathcal N(U,v_1)\right) \dd y \\
&=\int_\R \left( - \psi' (\partial_y v_1)^2 - \frac 1 2 \psi'' v_1 \partial_y v_1 + \left( \psi \partial_y v_1 + \frac 1 2 \psi' v_1\right)(b\partial_y v_1 + cv_1+\mathcal N(U,v_1))\right) \dd y \\
&= - \int_\R \psi' (\partial_y v_1)^2 + \frac 1 4 \int_\R \psi''' v_1^2 + \int_\R \left(\psi b (\partial_y v_1)^2 + \frac 1 2 \psi' c v_1^2 \right)\dd y\\
&\quad + \int_\R \left( \frac 1 2 \psi' b + \psi c\right) (\partial_y v_1) v_1 \dd y + \int_\R \left( \psi \partial_y v_1 + \frac 1 2 \psi' v_1\right)\mathcal N(U,v_1) \dd y.
\end{split}
\end{equation}
We also define 
\[ \zeta(y) := \sqrt{\psi'(y)} = \sech(y/\lambda), \quad w : = \zeta v_1.\]
It will be convenient to work with the weighted norms
\[ \|h\|_{L^2_\omega}^2 := \int_\R h^2 \sech(y) \dd y, \quad \|h\|_{H^1_\omega}^2 := \int_\R ((\partial_y h)^2 + h^2 ) \sech(y) \dd y.\]
Let us define the bilinear form
\begin{equation}\label{e:B}
\mathcal B(v) := \int_\R \psi' (\partial_y v)^2 \dd y - \frac 1 4 \int_\R \psi''' v^2 \dd y.
\end{equation}
Then the first two terms on the right in \eqref{e:ddtI} are equal to $-\mathcal B(v_1)$. In the case of odd perturbations, one could use the coercivity of $\mathcal B$ for odd functions, as established in \cite[Lemma 2.1]{KMM2017breathers}: for any $\lambda>0$,
\begin{equation}\label{e:odd-coercivity}
 v \text{ odd}, v \in H^1 \quad \Rightarrow \quad  \mathcal B(v) \geq \frac 3 4 \int_\R (\partial_y w)^2 \dd y, \,   \text{ where } w(y) = \sech(y/\lambda) v(y).
 \end{equation}  
In the general case, we must analyze the coercivity properties of $\mathcal B$ for general (not necessarily odd) functions in $H^1(\R)$. We find that $\mathcal B$ is coercive only up to a zeroth-order correction term:
\begin{lemma}\label{l:B}
With $\mathcal B$ defined as in \eqref{e:B}, one has for any $v\in H^1$,
\[ \mathcal B(v) \geq \int_\R \left((\partial_y w)^2- \frac 1 {2\lambda^2}  \sech^2(y/\lambda) w^2  \right)  \dd y   ,\]
with $w = \zeta v$.
\end{lemma}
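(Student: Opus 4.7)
The plan is to change variables from $v$ to $w = \zeta v = \sech(y/\lambda)\, v$, expand everything, and let the computation produce the claimed pointwise identity. Since $\psi' = \zeta^2$, we may write $\psi'(\partial_y v)^2 = \zeta^2(\partial_y(w/\zeta))^2$, which expands as
\[
\psi'(\partial_y v)^2 = (\partial_y w)^2 - 2 \frac{\zeta'}{\zeta}\, w\, \partial_y w + \frac{(\zeta')^2}{\zeta^2}\, w^2.
\]
The cross term is an exact derivative, $-2\frac{\zeta'}{\zeta} w\, \partial_y w = -\frac{\zeta'}{\zeta}\partial_y(w^2)$, so integrating by parts (which is justified by the $H^1$-decay of $v$ and the fact that $\zeta'/\zeta$ is bounded) produces $+w^2 \partial_y(\zeta'/\zeta) = w^2(\zeta''/\zeta - (\zeta')^2/\zeta^2)$. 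The $(\zeta')^2/\zeta^2$ terms cancel, leaving
\[
\int_\R \psi'(\partial_y v)^2 \dd y \;=\; \int_\R (\partial_y w)^2 \dd y + \int_\R \frac{\zeta''}{\zeta}\, w^2 \dd y.
\]

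Next, I would rewrite the zeroth-order piece of $\mathcal B$ in the $w$ variable. Since $v^2 = w^2/\zeta^2$ and $\zeta^2 = \sech^2(y/\lambda) = \psi'$, we have $\psi''' v^2 = (\psi'''/\psi') w^2$. Combining,
\[
\mathcal B(v) = \int_\R (\partial_y w)^2 \dd y + \int_\R \left(\frac{\zeta''}{\zeta} - \frac{1}{4} \frac{\psi'''}{\psi'} \right) w^2 \dd y.
\]
So the lemma reduces to the pointwise identity
\[
\frac{\zeta''(y)}{\zeta(y)} - \frac{1}{4}\frac{\psi'''(y)}{\psi'(y)} \;=\; -\frac{1}{2\lambda^2}\sech^2(y/\lambda).
\]

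The remaining step is a direct calculation with hyperbolic functions: from $\zeta = \sech(y/\lambda)$ one computes $\zeta''/\zeta = \lambda^{-2}(2\tanh^2(y/\lambda) - 1)$, and from $\psi' = \sech^2(y/\lambda)$ one computes $\psi'''/\psi' = 2\lambda^{-2}(3\tanh^2(y/\lambda) - 1)$; subtracting and using $1 - \tanh^2 = \sech^2$ gives exactly $-\tfrac{1}{2\lambda^2}\sech^2(y/\lambda)$. Notably the inequality in the lemma statement is in fact an equality, which is a useful fact to keep in mind.

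The main potential obstacle is making sure the integration by parts is legitimate: one needs $w^2 \zeta'/\zeta$ to vanish at $\pm\infty$. Since $\zeta'/\zeta = -\tfrac{1}{\lambda}\tanh(y/\lambda)$ is bounded and $w = \zeta v$ with $v \in H^1$, we have $w \in L^2$ and in fact $w \to 0$ at infinity, so the boundary terms vanish. Beyond that, the proof is just the hyperbolic identity above, which I would verify by a one-line computation.
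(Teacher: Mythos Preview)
Your proof is correct and follows essentially the same route as the paper: both perform the change of variables $w=\zeta v$, integrate by parts, and reduce the statement to the hyperbolic identity $\tfrac12\bigl(\zeta''/\zeta-(\zeta')^2/\zeta^2\bigr)=-\tfrac{1}{2\lambda^2}\sech^2(y/\lambda)$, so that the stated inequality is in fact an equality. Your write-up is simply more explicit about the intermediate steps (the expansion of $\psi'(\partial_y v)^2$, the boundary-term justification, and the separate computation of $\zeta''/\zeta$ and $\psi'''/\psi'$), while the paper compresses all of this into a single displayed line citing \cite{KMM2017breathers}.
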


\begin{proof}
With $\zeta = \sqrt{\psi'} > 0$ and $w = \zeta v$, we have, by a direct calculation (see also \cite{KMM2017breathers}),
\[ \mathcal B(v) = \int_{\R} \left((\partial_y w)^2 + \frac 1 2 \left( \frac{\zeta''}\zeta - \frac {(\zeta')^2}{\zeta^2}\right) w^2 \right)\dd y = \int_\R \left((\partial_y w)^2 - \frac 1 {2\lambda^2}\sech^2(y/\lambda) w^2\right)\dd y.\]
\end{proof}

Later, we will see that the coercive terms coming from the coefficients $b$ and $c$ will counterbalance the negative term in Lemma \ref{l:B}.

%
%


Our next step is to analyze the term involving $\mathcal N(U,v_1) = F'(U) - F'(U+v_1)$ in \eqref{e:ddtI}. For this, we need the following auxiliary lemma, which extends \cite[Formula (2.22)]{KMM2017breathers} to the case of general (not necessarily odd) functions $v$:
\begin{lemma}\label{l:psi-prime}
For any $v\in H^1(\R)$ and $\lambda >2$, let $w = \zeta v = \sech(y/\lambda) v$. Then for any $q>0$, there holds
\begin{equation}\label{e:psi-prime}
\int_\R \psi' |v|^{2+q} \dd y \lesssim \lambda^2 \|v\|_{L^\infty}^q \|\partial_y w\|_{L^2}^2.
\end{equation}
\end{lemma}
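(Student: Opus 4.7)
The plan begins by splitting off the $L^\infty$ factor: since $|v|^{q+2}\leq \|v\|_{L^\infty}^q v^2$ pointwise, one has
\[
\int_\R \psi'\,|v|^{q+2}\dd y \;\leq\; \|v\|_{L^\infty}^q \int_\R \sech^2(y/\lambda)\,v^2\dd y \;=\; \|v\|_{L^\infty}^q\,\|w\|_{L^2}^2,
\]
so the lemma reduces to the weighted Poincar\'e-type inequality $\|w\|_{L^2}^2 \lesssim \lambda^2 \|\partial_y w\|_{L^2}^2$ for $w = \sech(y/\lambda)\,v$ with $v\in H^1(\R)$. Note that this estimate is scale-invariant under $y=\lambda s$, so I reduce to the case $\lambda=1$.

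For the rescaled inequality, I would expand $\partial_y w = -\sech\tanh\,v + \sech\,v'$ and integrate the cross term by parts using the identity $\partial_y(\sech^2\tanh)=\sech^2(1-3\tanh^2)$, producing
\[
\int (\partial_y w)^2 \dd y = \int \sech^2(v')^2 \dd y + \int \sech^2(1-2\tanh^2)\,v^2 \dd y.
\]
The change of variable $t=\tanh y$ maps $\R$ to $(-1,1)$ and, writing $U(t)=v(y(t))$, converts the desired inequality into the bounded-interval estimate
\[
\int_{-1}^{1} U^2 \dd t \;\lesssim\; \int_{-1}^{1}(1-2t^2)\,U^2 \dd t + \int_{-1}^{1}(1-t^2)^2 (U')^2 \dd t,
\]
where $U(\pm 1)=0$ follows from $v\to 0$ at infinity for $v\in H^1$.

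The main obstacle is the sign-indefinite coefficient $(1-2t^2)$, which prevents a direct Poincar\'e comparison. To handle it, I would integrate the problematic contribution by parts using $3t^2-1=\partial_t(t^3-t)$, rewriting it as $-2\int t(1-t^2)\,U U'\dd t$, and then complete the square against the gradient term. The resulting combination $U' - \tfrac{2t}{3(1-t^2)}U$ simplifies, under the substitution $V=(1-t^2)^{1/3} U$, to $(1-t^2)^{-1/3} V'$. The estimate then reduces to a weighted Hardy-type inequality on $(-1,1)$ with the degenerate weight $(1-t^2)^{4/3}$, which one verifies via classical one-dimensional Hardy inequalities near the endpoints $t=\pm 1$, using that $V$ inherits from $U\in H^1_0((-1,1))$ a vanishing rate at the endpoints sufficient to make the constants work out.
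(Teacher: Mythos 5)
Your opening reduction is where the argument breaks down. After the pointwise bound $\int_\R\psi'|v|^{2+q}\dd y\le \|v\|_{L^\infty}^q\|w\|_{L^2}^2$, you need the weighted Poincar\'e inequality $\|w\|_{L^2}^2\lesssim\lambda^2\|\partial_y w\|_{L^2}^2$ to hold for \emph{every} $w=\sech(y/\lambda)v$ with $v\in H^1(\R)$, with a constant independent of $v$. That inequality is false: every compactly supported $H^1$ function $w$ is of this form (take $v=\cosh(y/\lambda)w\in H^1$), and for $w_W(y)=\phi(y/W)$ with a fixed bump $\phi\in C_c^\infty$ one has $\|w_W\|_{L^2}^2=W\|\phi\|_{L^2}^2$ while $\|\partial_y w_W\|_{L^2}^2=W^{-1}\|\phi'\|_{L^2}^2$, so the ratio blows up as $W\to\infty$. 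Your final bounded-interval estimate $\int_{-1}^1U^2\dd t\lesssim\int_{-1}^1(1-2t^2)U^2\dd t+\int_{-1}^1(1-t^2)^2(U')^2\dd t$ is the same false statement rewritten in the variable $t=\tanh y$ (the weight $(1-t^2)^2$ degenerates, and $(1-2t^2)$ is negative, exactly where the transformed counterexample lives), so no Hardy-type analysis at the endpoints can rescue it; the vagueness in the last step is not a fixable technicality. The lemma survives for such $v$ only because $\|v\|_{L^\infty}=\|\cosh(\cdot/\lambda)w_W\|_{L^\infty}$ is exponentially large in $W/\lambda$; by extracting the sup norm globally in the first line you throw away precisely this compensation.

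The paper's proof keeps the interplay between the weight and the size of $v$ throughout: it bounds $\psi'|v|^{2+q}\lesssim e^{q|y|/\lambda}|w|^{2+q}$, integrates by parts separately on $(-\infty,0]$ and $[0,\infty)$ to trade the growing exponential for $\partial_y(|w|^{2+q})$ plus a boundary term $|w(0)|^{2+q}$ (handled by $\|v\|_{L^\infty}^q\|w\|_{H^1}^2$ via Sobolev embedding), and then spends the sup norm \emph{pointwise}, via $|w|^{q}\lesssim\|v\|_{L^\infty}^{q/2}e^{-q|y|/(2\lambda)}|w|^{q/2}$, to cancel only half of the exponential weight, so that Young's inequality lets the remaining weighted integral be absorbed into the left-hand side. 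Note that this argument as written in the paper actually produces $\lambda^2\|v\|_{L^\infty}^q\|w\|_{H^1}^2$ on the right; for that weaker form your first step would already be (trivially) sufficient, which is a sign that the stated version with only $\|\partial_y w\|_{L^2}^2$ is exactly the part that cannot be reached by pulling out $\|v\|_{L^\infty}^q$ at the start.
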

\begin{proof}
With $\zeta = \sqrt{\psi'} = \sech(y/\lambda)$ and $v = w/\zeta$, we have
\begin{equation}\label{e:first-line}
 \begin{split}
 \int_\R \psi' |v|^{2+q} \dd y &\leq  \int_\R (\psi')^{-q/2} |w|^{2+q} \dd y = \int_\R \zeta^{-q} |w|^{2+q}\dd y \lesssim \int_\R e^{q|y|/\lambda} |w|^{2+q} \dd y. 
 \end{split}
 \end{equation}

Integrating by parts separately on $(-\infty,0]$ and $[0,\infty)$, we have
\[\begin{split}
 \int_\R e^{q|y|/\lambda} |w|^{2+q} \dd y &= \int_{-\infty}^0 e^{-qy/\lambda} |w|^{2+q} \dd y + \int_0^\infty e^{qy/\lambda} |w|^{2+q} \dd y\\
  &=  \frac {\lambda} q \int_{-\infty}^0 e^{-qy/\lambda} \partial_y (|w|^{2+q})\dd y  -\frac {\lambda} q |w(0)|^{2+q}\\
  &\quad  - \frac {\lambda} q \int_0^\infty e^{qy/\lambda} \partial_y (|w|^{2+q})\dd y - \frac {\lambda} q |w(0)|^{2+q}\\
  &= - \frac {\lambda} q \left( (2+q)\int_\R \sgn(y) e^{q|y|/\lambda} |w|^{q} w \partial_y w \dd y - 2 |w(0)|^{2+q}\right).
 \end{split}\]
For $w\in H^1(\R)\subset C^{1/2}(\R)$, the value of $w(0)$ is well-defined. Since $\sech(0) = 1$, we have 
\[|w(0)|^{2+q} = v(0)^{q} w(0)^2\leq \|v\|_{L^\infty(\R)}^{q} \|w\|_{H^1(\R)}^2,\] 
by the Sobolev embedding $H^1(\R) \subset L^\infty(\R)$. Using 
 $|w|^q = |w|^{q/2}\zeta^{q/2} |v|^{q/2} \lesssim e^{-q|y|/(2\lambda)} |w|^{q/2} |v|^{q/2}$, 
we have
\[ \begin{split}
\int_\R e^{q|y|/\lambda} |w|^{2+q} \dd y 
&\leq C \lambda \int_\R \|v\|_{L^\infty}^{q/2} e^{q|y|/(2\lambda)} |w|^{1+q/2} |\partial_y w| \dd y +  C\lambda \|v\|_{L^\infty(\R)}^q \|w\|_{H^1(\R)}^2\\
&\leq C^2 \lambda^2 \int_\R \|v\|_{L^\infty}^q |\partial_y w|^2 \dd y + \frac 1 2 \int_\R e^{q|y|/\lambda} |w|^{2+q} \dd y + C \lambda\|v\|^q_{L^\infty} \|w\|_{H^1}^2,
\end{split}\]
which implies
\[ \int_\R e^{q|y|/\lambda} |w|^{2+q} \dd y \leq C \lambda^2\|v\|_{L^\infty}^q \|w\|_{H^1}^2.\]
With \eqref{e:first-line}, we obtain
\[ \int_\R \psi' |v|^{2+q} \dd y \lesssim \lambda^2\|v\|_{L^\infty}^q \|w\|_{H^1}^2,\]
as desired. 
\end{proof}
Now we are able to prove the necessary integral bound for $\mathcal N(U,v_1)$:
\begin{lemma}\label{l:Fterm}
In the case $c\equiv 0$, $U\equiv 0$, and also in the case $c\not\equiv 0$, $F(u) = 1-\cos u$, there holds
\begin{equation}\label{e:J} \left|\int_\R (F'(U)- F'(U+v_1))(\psi \partial_y v_1 + \frac 1 2 \psi' v_1) \dd y \right| \lesssim \lambda^2( \eps \|\partial_y w\|_{L^2}^2 + \delta \|v_1\|_{H^1_\omega}^2). 
\end{equation}
\end{lemma}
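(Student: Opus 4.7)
The plan is to use integration by parts together with the fact that $U$ is a stationary solution to reduce the integral in \eqref{e:J} to two Taylor-remainder integrals, and then to estimate each using the decay of $U'$ from Theorem~\ref{t:near-constant} and Lemma~\ref{l:psi-prime}. Starting from $\int(F'(U)-F'(U+v_1))(\psi\partial_y v_1+\tfrac12\psi'v_1)\,\dd y$, I would apply the chain rule in the form $F'(U+v_1)\partial_y v_1 = \partial_y F(U+v_1)-F'(U+v_1)U'$ and the product rule $F'(U)\partial_y v_1 = \partial_y(F'(U)v_1)-F''(U)U'v_1$, integrate both resulting derivatives by parts against $\psi$, and exploit the stationary-state identity $\int \psi U'F'(U)\,\dd y = -\int \psi' F(U)\,\dd y$ (which follows from $\partial_y F(U)=F'(U)U'$ and one further IBP, since $F(U)\to 0$ at $\pm\infty$). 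These manipulations reorganize the integrand as $A+B$, where
\begin{align*}
A &= \int_\R \psi\, U'\bigl[F'(U+v_1)-F'(U)-F''(U)v_1\bigr]\,\dd y,\\
B &= \int_\R \psi'\bigl[F(U+v_1)-F(U)-\tfrac12(F'(U)+F'(U+v_1))v_1\bigr]\,\dd y.
\end{align*}
The value of this rewriting is that both integrands are now pure Taylor remainders in $v_1$, visibly vanishing to at least quadratic order.

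In case (i) ($c\equiv 0$, $U\equiv 0$), the term $A$ vanishes and $B$ reduces to $\int\psi'[F(v_1)-\tfrac12 v_1F'(v_1)]\,\dd y$. Using $F(0)=F'(0)=0$, direct Taylor expansion shows the $v_1^2$ contributions cancel, leaving $|F(v_1)-\tfrac12 v_1F'(v_1)|\lesssim |v_1|^3$. Lemma~\ref{l:psi-prime} with $q=1$ then bounds $|B|\lesssim \lambda^2\|v_1\|_{L^\infty}\|\partial_y w\|_{L^2}^2\lesssim \lambda^2\eps\|\partial_y w\|_{L^2}^2$, using orbital stability to get $\|v_1\|_{L^\infty}\lesssim\|v_1\|_{H^1}\lesssim\eps$.

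In the sine-Gordon case, $B$ is handled in exactly the same way, since $\phi(z):=F(U+z)-F(U)-\tfrac12(F'(U)+F'(U+z))z$ satisfies $\phi(0)=\phi'(0)=\phi''(0)=0$ by direct computation, whence $|\phi(v_1)|\lesssim|v_1|^3$ uniformly (using $\|F'''\|_{L^\infty}\leq 1$), and Lemma~\ref{l:psi-prime} again gives $|B|\lesssim\lambda^2\eps\|\partial_y w\|_{L^2}^2$. For $A$, the sine-Gordon identity $F'''(U)=-\sin(U_\delta)$ (valid because $\xi=2k\pi$) yields the sharper expansion $F'(U+v_1)-F'(U)-F''(U)v_1 = -\tfrac12\sin(U_\delta)v_1^2 + O(|v_1|^3)$. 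The quadratic piece of $A$ is controlled by $\lambda\int|U_\delta'||\sin U_\delta|v_1^2\,\dd y\lesssim\lambda\delta^2\int e^{-2k|y|}v_1^2\,\dd y$, where Theorem~\ref{t:near-constant} supplies $|U_\delta|,|U_\delta'|\lesssim \delta e^{-k|y|}$; choosing $k\in(1/2,1)$ (allowed under the hypothesis $|c|\lesssim e^{-|y|}$) gives $e^{-2k|y|}\lesssim\sech(y)$, so this piece is $\lesssim\lambda\delta^2\|v_1\|_{H^1_\omega}^2\lesssim\lambda^2\delta\|v_1\|_{H^1_\omega}^2$. The cubic piece is $\lesssim\lambda\delta\int e^{-k|y|}|v_1|^3\,\dd y$; for $\lambda>2/k$ one has $e^{-k|y|}\lesssim\sech^2(y/\lambda)=\psi'$, so Lemma~\ref{l:psi-prime} gives this contribution $\lesssim\lambda^3\delta\eps\|\partial_y w\|_{L^2}^2$, which absorbs into $\lambda^2\eps\|\partial_y w\|_{L^2}^2$ once we note that $\lambda\delta$ is bounded.

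The main obstacle is matching the various exponential decay rates: $U_\delta'$ decays only like $e^{-k|y|}$ with $k<m=1$, while the weight $\sech(y)$ in the $H^1_\omega$ norm decays like $e^{-|y|}$, so a single factor of $U_\delta'$ is too weak to absorb into the weighted norm. This mismatch is overcome precisely because the quadratic part of $A$ picks up \emph{two} factors of order $\delta e^{-k|y|}$ (one from $U_\delta'$, one from $\sin U_\delta$), producing a combined $\delta^2 e^{-2k|y|}$ decay that dominates $\sech(y)$ as soon as $k\geq 1/2$; without this ``double decay'' coming from the sine-Gordon structure, the lemma would fail in its present form, which is why extending the argument to more general $F$ requires a separate analysis.
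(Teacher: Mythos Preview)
Your proof is correct. In Case~1 it coincides with the paper's argument. In Case~2 you take a genuinely different route: the paper exploits the sine-Gordon addition formula to write $F'(U)-F'(U+v_1)=\sin U(1-\cos v_1)-\cos U\sin v_1$ and treats the two resulting pieces $J_1,J_2$ by direct integration by parts, whereas you first perform the unified $A+B$ Taylor decomposition (valid for any smooth $F$) and only then invoke the sine-Gordon structure to see that the $A$ integrand carries an extra factor of $\sin U_\delta$. Your approach is cleaner conceptually and makes explicit why the argument generalizes---the only $F$-specific input is the smallness of $F'''(U)$, which is what fails for generic potentials with $\xi\neq 0$. The price is a slightly worse $\lambda$-dependence in the cubic piece of $A$ (you get $\lambda^3\delta\eps$ rather than $\lambda^2\eps$) and the mild constraint $\lambda>2/k$; both are harmless here because the lemma is only applied with $\lambda=100$ fixed and $\delta$ chosen small afterwards, but it would be worth stating these caveats explicitly rather than absorbing them into ``$\lambda\delta$ is bounded.''
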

\begin{proof}
For brevity, let $J$ refer to the left-hand side of \eqref{e:J}.

\medskip

\emph{Case 1: $c(y) \equiv 0$ and $U(y) \equiv 0$.} Then, by our assumption $F'(0) = 0$, we have 
\[ \begin{split} 
J = -\int_\R F'(v_1) (\psi \partial_y v_1 + \frac 1 2 \psi' v_1) \dd y &= -\int_\R (\psi \partial_y (F(v_1)) + \frac 1 2 \psi' v_1 F'(v_1))\dd  y \\
&= \int_\R (\psi' (F(v_1) - \frac 1 2 v_1 F'(v_1)) \dd y.
\end{split} \]
Since $F\in C^3$ and $F(0) = F'(0) = 0$, a Taylor expansion shows 
\begin{equation*}
\begin{split}
 |F(v_1) - \frac 1 2 v_1 F'(v_1)| &= \Big|F(0) + F'(0)v_1 + \frac 1 2 F''(0)v_1^2 + F'''(z_1)v_1^3 - \frac 1 2 F'(0) v_1 \\
 &\qquad - \frac 1 2 v_1^2 F''(0) - \frac 1 4 v_1^3 F'''(z_2)\Big|\\
  &\lesssim |v_1|^{3}.
  \end{split}
 \end{equation*}
 We therefore have 
 \[ |J|\leq \int_\R \psi' |v_1|^3 \dd y,\]
 and we may apply Lemma \ref{l:psi-prime} to obtain the conclusion of the lemma.
 
\medskip

%
%
%
\noindent \emph{Case 2: $F'(u) = \sin(u)$.} We now have $F'(U) - F'(U+v_1) = \sin U (1-\cos v_1) - \cos U \sin v_1$, and $F''(0) = 1$, so that, by Theorem \ref{t:near-constant},
\[ |\sin U(y)| \leq |U(y)| \lesssim \delta e^{-(1-\delta)|y|}, \quad |U'(y)| \lesssim \delta e^{-(1-\delta)|y|}.\]
We write $J = J_1 + J_2$. The first term is defined by
 \[ \begin{split}
J_1 &:= \int_\R \left(\psi \partial_y v_1 + \frac 1 2 \psi' v_1\right) \sin U (1-\cos v_1) \dd y.
\end{split}
\]
Since $|1-\cos v_1|\lesssim v_1^2$, the second part of $J_1$ is bounded by $\lesssim \int_\R \psi' |v_1|^3 \dd y$. For the first part,
\[\begin{split}
\int_\R \psi \partial_y v_1 \sin U(1-\cos v_1)\dd y &= \int_\R \psi \sin U \partial_y (v_1 - \sin v_1) \dd y\\
&= -\int_\R \psi' \sin U (v_1 - \sin v_1) \dd y - \int_\R \psi (\cos U) U' (v_1 - \sin v_1) \dd y.
\end{split}\]
Since $U$ decays at a faster rate than $\psi'(y) = \sech^2(y/\lambda)>0$, we have $|U'| \lesssim \psi'$. Therefore, the last integral can also be bounded by $\lesssim \int_\R \psi' |v_1|^3 \dd y$.  
Next, we have
\[\begin{split}
 J_2 &:= \int_\R \left(\psi \partial_y v_1 + \frac 1 2 \psi' v_1\right) \cos U \sin v_1 \dd y\\
 &= \int_\R \left( \psi \cos U \partial_y (1-\cos v_1) + \frac 1 2 \psi ' v_1 \cos U \sin v_1 \right) \dd y\\
 &= \int_\R \psi' \cos U \left( \cos (v_1)-1 + \frac 1 2 v_1 \sin v_1\right) \dd y + \int_\R \psi U' \sin U (1-\cos v_1) \dd y.
\end{split} \]
Standard Taylor expansions show $|1-\cos v_1| \lesssim v_1^2$ and $|\cos(v_1) - 1 + \frac 1 2 v_1 \sin v_1| \lesssim v_1^4$. Therefore, with $|\sin U(y)| + |U'(y)| \lesssim \delta e^{-(1-\delta)|y|}$,
\[ |J_2| \lesssim \int_\R \psi' v_1^4 \dd y + \delta^2 \int_\R e^{-2(1-\delta)|y|} v_1^2 \dd y \leq \int_\R \psi' v_1^4 \dd y + \delta^2 \|v_1 \|_{L^2_\omega}^2.\]
Applying Lemma \ref{l:psi-prime} again, the proof is complete.
\end{proof}

Now we specialize to the case $U\equiv 0$, and consider general perturbations (without parity assumptions). 
\begin{proposition}\label{p:integral-bound}
Let $F$ be as in Theorem \ref{t:asymptotic}. For any global solution $u(t,y)$ of \eqref{e:main-y} with 
\[ \|u(t)\|_{H^1(\R)}^2 + \|\partial_t u(t)\|_{L^2(\R)}^2 \lesssim \eps^2, \quad t\in [0,\infty)\]
there holds 
\[ \int_0^\infty (\|u(t) \|_{H^1_\omega}^2 + \|\partial_t u(t)\|_{L^2_\omega}^2) \dd t \lesssim \eps^2.\]
\end{proposition}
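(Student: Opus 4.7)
The plan is a Virial-type integral estimate based on the functional $\mathcal I(v)$ already introduced in \eqref{e:Idef}, with $\psi(y) = \lambda\tanh(y/\lambda)$ and $\lambda > 2$ supplied by the hypothesis \eqref{e:b-c-coercive}. The heart of the argument will be to establish the coercivity
\[
-\frac{d}{dt}\mathcal I(v(t)) \;\gtrsim\; \|v_1(t)\|_{H^1_\omega}^2
\]
for any solution $v = (v_1,v_2)$ of \eqref{e:vector} with $\|v\|_{H^1 \times L^2} \leq \eps$. Once this is in hand, $|\mathcal I(v(t))| \lesssim \|v_1\|_{H^1}\|v_2\|_{L^2} \lesssim \eps^2$ uniformly in $t$, so integration from $0$ to $T$ and letting $T\to\infty$ yields the $v_1$ part of the claim.

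To prove the coercivity I would start from \eqref{e:ddtI} and integrate by parts on the mixed term $\int (\tfrac12 \psi' b + \psi c)\, v_1\, \partial_y v_1 \dd y$, converting it to $-\tfrac12 \int (\tfrac12\psi' b + \psi c)' v_1^2 \dd y$. After the two $\tfrac12 \psi' c\, v_1^2$ contributions cancel, one finds
\[
-\frac{d}{dt}\mathcal I(v) \;=\; \mathcal B(v_1) \;-\; \int \psi b\,(\partial_y v_1)^2 \dd y \;+\; \tfrac14 \int \bigl[(\psi' b)' + 2\psi c'\bigr] v_1^2 \dd y \;-\; \mathcal R,
\]
with $\mathcal B$ as in \eqref{e:B} and $\mathcal R$ the nonlinear contribution. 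The two conditions in \eqref{e:b-c-coercive} give, respectively, $\psi b \leq \tfrac14 \psi'$ (so the unfavorable cross-term $-\int \psi b\,(\partial_y v_1)^2 \dd y$ costs at most $\tfrac14 \int \psi'(\partial_y v_1)^2 \dd y$, absorbable into $\mathcal B$) and $\tfrac14 \int[(\psi'b)' + 2\psi c'] v_1^2 \dd y \geq 2\|w\|_{L^2}^2$ where $w = \sech(y/\lambda) v_1$. Applying Lemma \ref{l:B} to rewrite $\mathcal B(v_1)$ as $\|\partial_y w\|_{L^2}^2$ up to an $O(\lambda^{-2})\|w\|_{L^2}^2$ loss, and using the direct estimate $|\psi'''| \leq 4\lambda^{-2}\psi'$ to control the corresponding leftover, I expect to obtain
\[
-\tfrac{d}{dt}\mathcal I(v) \;\geq\; \tfrac{3}{4}\|\partial_y w\|_{L^2}^2 + \bigl(2 - \tfrac{C}{\lambda^2}\bigr)\|w\|_{L^2}^2 - \mathcal R,
\]
with the $\|w\|_{L^2}^2$ coefficient strictly positive for $\lambda > 2$. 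Lemma \ref{l:Fterm} (case~1, since $U \equiv 0$) bounds $|\mathcal R| \leq C\lambda^2 \eps \|\partial_y w\|_{L^2}^2$, which is absorbed for $\eps$ small. The pointwise inequality $\sech(y) \leq \sech^2(y/\lambda)$ (valid for $\lambda > 2$) then converts the bound on $\|w\|_{H^1}^2$ into one on $\|v_1\|_{H^1_\omega}^2$.

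For the $v_2$ bound I would introduce a second functional $\mathcal J(v) := -\int \sech^2(y/\lambda)\, v_1 v_2 \dd y$. Differentiating in $t$ via \eqref{e:vector} and integrating by parts on the $v_1 \partial_y^2 v_1$ and $v_1 b \partial_y v_1$ terms gives
\[
\int \sech^2(y/\lambda)\, v_2^2 \dd y \;=\; -\frac{d}{dt}\mathcal J + R(v_1),
\]
where $R(v_1)$ is a sum of $\int \sech^2(y/\lambda)(\partial_y v_1)^2 \dd y$, integrals $\int \phi\, v_1^2 \dd y$ with $\phi$ bounded, and a cubic piece, each controlled pointwise in $t$ by $C(1+\eps)\|v_1\|_{H^1_\omega}^2$. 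Integrating in $t$, using $|\mathcal J(v(t))| \lesssim \eps^2$ uniformly, and invoking the previous step to bound $\int_0^\infty \|v_1\|_{H^1_\omega}^2 \dd t$, yields the $v_2$ bound and completes the proposition.

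The main difficulty is the coercivity calculation. In the odd-perturbation setting of \cite{KMM2017breathers, snelson2016stability} one may replace Lemma \ref{l:B} by the sharp positive estimate \eqref{e:odd-coercivity}, with no zeroth-order correction; here the $-(2\lambda^2)^{-1}\sech^2 w^2$ loss in Lemma \ref{l:B}, the $\psi'''$ remainder, and the unfavorable $-\int \psi b\,(\partial_y v_1)^2 \dd y$ cross-term must all be absorbed by the positive $\|w\|_{L^2}^2$ extracted from the second condition in \eqref{e:b-c-coercive}. This is precisely what pins down the numerical constants (4 and 8) and the restriction $\lambda > 2$ in that hypothesis.
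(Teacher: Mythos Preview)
Your proposal is correct and follows essentially the same strategy as the paper's proof: both derive the coercivity of $-\frac{d}{dt}\mathcal I$ from \eqref{e:ddtI}, Lemma \ref{l:B}, Lemma \ref{l:Fterm}, and the same integration by parts on the $b,c$ terms, then recover the $v_2$ bound from an auxiliary functional. The only cosmetic differences are that the paper expands $(\partial_y w)^2$ back into $v_1$-variables immediately after Lemma \ref{l:B} (combining with the $b,c$ terms directly at the level of $\sech^2(y/\lambda)(\partial_y v_1)^2$ and $\sech^2(y/\lambda)v_1^2$), whereas you stay in $w$-variables and convert at the end, and the paper uses $\int \sech(y)\,v_1 v_2\,\mathrm{d}y$ rather than your $\mathcal J$ with weight $\sech^2(y/\lambda)$; neither difference is substantive.
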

\begin{proof}
In the above calculations, we choose $\lambda>0$ so that condition \eqref{e:b-c-coercive} in the statement of Theorem \ref{t:asymptotic} is satisfied. From \eqref{e:ddtI}, Lemma \ref{l:B}, and Lemma \ref{l:Fterm}, we have, with $v_1 = u$ and $w = \zeta v_1$,
\begin{equation}\label{e:stability1}
\begin{split}
 - \frac d {dt} \mathcal I(v) &\geq 
 \int_\R \left( (\partial_y w)^2 - \frac 1 {2\lambda^2}  \sech^2(y/\lambda) w^2\right) \dd y 
 - \eps\|\partial_y w\|_{L^2}^2 - \delta \|v_1\|_{H^1_\omega}^2\\
 & \quad - \int_\R \left(\psi b (\partial_y v_1)^2 + \frac 1 2 \psi' c v_1^2 \right)\dd y - \int_\R \left( \frac 1 2 \psi' b + \psi c\right) (\partial_y v_1) v_1 \dd y.
\end{split}
\end{equation}
With $\eps< \frac 1 4$, we use $w = \zeta v_1$ and Young's inequality to write 
\[(1-\eps)(\partial_y w)^2 \geq \frac 3 4 \zeta^2 (\partial_y v_1)^2 + \frac 3 4 (\zeta')^2 v_1^2 + \frac 3 2 \zeta \zeta' v_1 \partial_y v_1    \geq
\frac 1 2 \zeta^2 (\partial_y v_1)^2 - \frac 3 2 (\zeta')^2 v_1^2.\]  
This implies
\[ \begin{split}
\int_\R &\left( (\partial_y w)^2 - \frac 1 {2\lambda^2}  \sech^2(y/\lambda) w^2\right) \dd y  - \eps\|\partial_y w\|_{L^2}^2\\ 
&\geq \int_\R  \left( \frac 1 2\sech^2(y/\lambda) (\partial_yv_1)^2 - \left(\frac 3 2\sech^2(y/\lambda) \tanh^2(y/\lambda) + \frac 1 {2\lambda^2} \sech^4(y/\lambda)\right)v_1^2\right) \dd y\\
&\geq  \int_\R\left( \frac 1 2\sech^2(y/\lambda) (\partial_yv_1)^2 - \frac 3 2\sech^2(y/\lambda) v_1^2\right)\dd y,
\end{split}\]
using $(3/2)\tanh^2(y/\lambda) + 1/(2\lambda^2) \sech^2(y/\lambda) = 3/2 + (1/(2\lambda^2) - 3/2)\sech^2(y/\lambda) \leq 3/2$, since $\lambda^2 > 3$. For the terms with $b$ and $c$, integrating by parts, we have
\[ \begin{split}
- \int_\R \left(\psi b (\partial_y v_1)^2 + \frac 1 2 \psi' c v_1^2 \right)\dd y &-  \int_\R \left( \frac 1 2 \psi' b + \psi c\right) (\partial_y v_1) v_1 \dd y\\
&= -\int_\R \left( \psi b (\partial_y v_1)^2 - \frac 1 4 (2 \psi c'  + (\psi' b)') v_1^2 \right) \dd y.
\end{split}
\]
Using our assumption \eqref{e:b-c-coercive} on $b$ and $c$, we see that $\frac 1 4 (2\psi c' + (\psi' b)') \geq  2 \sech^2(y/\lambda)$, and $-\psi b \geq -\frac 1 4 \sech^2(y/\lambda)$.  
The estimate \eqref{e:stability1} now becomes
\begin{equation}\label{e:Ibound}
 - \frac d {dt} \mathcal I(v) \geq \int_\R \left( \frac 1 4 \sech^2(y/\lambda) (\partial_y v_1)^2 + \frac 1 2 \sech^2(y/\lambda) v_1^2 \right) \dd y - \delta \|v_1\|_{H^1_\omega}^2 \geq C_1 \|v_1\|_{H^1_\omega},
\end{equation}
since $\sech^2(y/\lambda) \gtrsim \sech(y)$.

Next, we notice that
\begin{equation}\label{e:sech-bound}
\begin{split}
 \frac d {dt} \int_\R \sech(y) v_1 v_2 \dd y &= \int_\R \sech(y) (v_2^2 + v_1 \partial_t v_2)\dd y \\
 &= \int_\R \sech(y) [v_2^2 + v_1(\partial_y^2 v_1 + b \partial_y v_1 + cv_1 + \mathcal N(U,v_1))]\dd y\\
 &= \int_\R \left[\sech(y)(v_2^2 - (\partial_y v_1)^2 + v_1 \mathcal N(U,v_1)) - \sech'(y) v_1 \partial_y v_1\right]\dd y \\
 &\quad +\int_\R \sech(y)\left[ b v_1 \partial_y v_1 + c v_1^2\right] \dd y\\
 &= \int_\R \left[\sech(y)(v_2^2 - (\partial_y v_1)^2 + v_1 \mathcal N(U,v_1)) + \frac 1 2 \sech''(y) v_1^2 \right]\dd y \\
 &\quad -\frac 1 2 \int_\R (\sech'(y)b + \sech(y) b' + 2\sech(y)c)v_1^2 \dd y\\ 
 &\geq \|v_2\|_{L^2_\omega}^2 - \|v_1\|_{H^1_\omega}^2 +\int_\R \left(\sech(y) v_1 \mathcal N(U,v_1) + \frac 1 2 \sech''(y) v_1^2 \right)\dd y \\
 &\quad - C(\|b\|_{W^{1,\infty}}+ \|c\|_{L^\infty}) \|v_1\|_{H^1_\omega}^2,
 \end{split}
 \end{equation} 
using $|\sech'(y)| = |\sech(y)\tanh(y)|\lesssim \sech(y)$. To bound the last expression from below, note that $|\sech''(y)| = \sech(y)|\tanh^2(y)-\sech^2(y)| \lesssim \sech(y)$. Also, since $\mathcal N(U,v_1) = F'(0) - F'(v_1) = F''(z) v_1$ for some $|z|\leq |v_1|$, we have $|v_1\mathcal N(U,v_1)|\lesssim v_1^2$. We then have
 \begin{equation}\label{e:sech}
  \frac d {dt} \int_\R \sech(y) v_1 v_2 \dd y \geq \|v_2\|_{L^2_\omega}^2 - C\|v_1\|_{H^1_\omega}^2,
  \end{equation}
with $C>0$ depending on $b$ and $c$. Combining this with \eqref{e:Ibound}, we conclude in the following way: integrate \eqref{e:Ibound} from $0$ to $t_0$ and use the assumption that $\|v_1\|_{H^1} + \|v_2\|_{L^2} \lesssim \eps$ to bound $\mathcal I$ from above, since $\mathcal I(v) \lesssim \|v_1\|_{H^1}\|v_2\|_{L^2} \lesssim \eps^2$:
 \[ \int_0^{t_0} \|v_1\|_{H^1_\omega}^2 \leq -\mathcal I(v)(t_0) + \mathcal I(v)(0) \lesssim \eps^2. \]
 Sending $t_0 \to \infty$, we have
 \[ \int_0^\infty \|v_1\|_{H^1_\omega}^2 \lesssim \eps^2.\]
 Next, we similarly integrate \eqref{e:sech} from $0$ to $t_0$, use the bound $\|v_1\|_{H^1} + \|v_2\|_{L^2} \lesssim \eps$ to write $\int_\R \sech(y) v_1 v_2 \dd y \lesssim \|v_1\|_{L^2}\|v_2\|_{L^2} \lesssim \eps^2$, and send $t_0\to \infty$:
 \[ \int_0^\infty \|v_2\|_{L^2_\omega}^2 \dd t \lesssim \int_0^\infty\|v_1\|_{H^1_\omega}^2 \dd t + \eps^2 \lesssim \eps^2,\]
as claimed.
\end{proof}

The following result, in combination with orbital stability, implies Theorem \ref{t:asymptotic}. On its own, Theorem \ref{t:no-breathers} is a no-breathers theorem ruling out the existence small global solutions that do not vanish as $t\to\infty$.

\begin{theorem}\label{t:no-breathers}
With $F$ as in Theorem \ref{t:asymptotic}, for any global solution $u(t,y)$ of \eqref{e:main-y} with 
\[ \|u(t)\|_{H^1(\R)}^2 + \|\partial_t u(t)\|_{L^2(\R)}^2 \lesssim \eps^2, \quad t\in [0,\infty)\]
there holds 
\[ \lim_{t\to \infty} \left(\|u(t)\|_{H^1(I)} + \|\partial_t u(t)\|_{L^2(I)}\right) = 0,\]
for any bounded interval $I\subset \R$.
\end{theorem}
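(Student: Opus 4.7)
The plan is to deduce the theorem from Proposition \ref{p:integral-bound} via a Barbalat-type argument. Set
\[ g(t) := \|u(t)\|_{H^1_\omega}^2 + \|\partial_t u(t)\|_{L^2_\omega}^2, \]
which is nonnegative and satisfies $\int_0^\infty g(t)\dd t \lesssim \eps^2 < \infty$ by Proposition \ref{p:integral-bound}. On any bounded interval $I$ the weight $\sech(y)$ is bounded below by a positive constant $c_I$, so $\|u(t)\|_{H^1(I)}^2 + \|\partial_t u(t)\|_{L^2(I)}^2 \leq c_I^{-1} g(t)$, reducing the theorem to showing $g(t)\to 0$ as $t\to\infty$.

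To pass from integrability of $g$ to $g(t)\to 0$, the standard tool is Barbalat's lemma: any uniformly continuous $L^1$ function on $[0,\infty)$ must vanish at infinity. I would establish the stronger property that $g$ is Lipschitz with constant depending only on $\eps$ and the fixed data $b,c,F$. Differentiating and integrating by parts in $y$ gives, formally,
\[ g'(t) = \int_\R \sech(y)\bigl(2u\,\partial_t u + 2\tanh(y)\,\partial_y u\,\partial_t u + 2(\partial_t^2 u - \partial_y^2 u)\partial_t u\bigr)\dd y. \]
The critical observation is that by \eqref{e:main-y}, $\partial_t^2 u - \partial_y^2 u = b\partial_y u + cu - F'(u)$, so the potentially troublesome second-order derivatives cancel and every term in $g'(t)$ is controlled pointwise by $|u||\partial_t u|+|\partial_y u||\partial_t u|$ (using $|F'(u)|\lesssim |u|$, valid since $F'(0)=0$). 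Integrating then yields $|g'(t)|\lesssim \eps^2$.

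The principal technical obstacle is the regularity needed to justify the integration by parts: for $(u,\partial_t u)\in C([0,\infty); H^1\times L^2)$ the quantities $\partial_y^2 u$ and $\partial_t^2 u$ are a priori only distributions. I would handle this by approximating the initial data by smooth data, carrying out the computation rigorously for the corresponding classical solutions, and passing to the limit. The key point that makes this work is precisely the above cancellation: the final expression for $g'(t)$ involves only first-order derivatives of $u$ in both $t$ and $y$, hence depends continuously on the $H^1\times L^2$ norm, and so the Lipschitz bound $|g'(t)|\lesssim \eps^2$ survives in the limit. An alternative to appealing to Barbalat's lemma is a direct contradiction argument: if $g(s_n)\geq \mu$ for some $s_n\to\infty$, the Lipschitz bound forces $g\geq \mu/2$ on intervals of uniform positive length around each $s_n$, contradicting $\int_0^\infty g\dd t <\infty$.
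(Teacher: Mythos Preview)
Your proposal is correct and follows essentially the same route as the paper: both differentiate the weighted quantity $g(t)=\|u(t)\|_{H^1_\omega}^2+\|\partial_t u(t)\|_{L^2_\omega}^2$, use the equation to cancel the second-order derivatives, and combine the resulting bound on $g'$ with the integrability of $g$ from Proposition~\ref{p:integral-bound}. The only minor difference is in the last step: the paper keeps the $\sech$ weight in the estimate to obtain the sharper bound $|g'(t)|\lesssim g(t)$, then integrates from $t$ to a good sequence $t_n\to\infty$ to get $g(t)\lesssim\int_t^\infty g(s)\dd s\to 0$, whereas you discard the weight to get the Lipschitz bound $|g'(t)|\lesssim\eps^2$ and invoke Barbalat; both conclusions are standard and equivalent for the present purpose.
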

\begin{proof}
Following the proof of \cite[Theorem 1.1]{KMM2017breathers}, we consider
\[ \|v(t)\|_{H^1_\omega\times L^2_\omega}^2 = \int_\R \sech(y) [(\partial_y v_1)^2 + v_1^2 + v_2^2]\dd y.\]
Differentiating and using the equation \eqref{e:vector} satisfied by $(v_1,v_2)$, we have
\[\begin{split}
\frac d {dt} \|v(t)\|_{H^1_\omega\times L^2_\omega}^2  &= 2 \int_\R \sech(y) [\partial_y v_1 \partial_t \partial_y v_1 + v_1 \partial_t v_1 + v_2 \partial_t v_2] \dd y\\
&= 2\int_\R \sech(y) [ \partial_y v_1 \partial_y v_2 + v_1 v_2 + v_2(\partial_y^2 v_1 + b\partial_y v_1 + cv_1 +\mathcal N(U,v_1))] \dd y\\
&= 2 \int_\R \sech(y) v_2 ((1+c)v_1 + b\partial_y v_1 + \mathcal N(U,v_1)) \dd y + \int_\R \sech(y) \partial_y (v_2 \partial_y v_1) \dd y \\
&= 2 \int_\R \sech(y) v_2 ((1+c)v_1 + b\partial_y v_1 + \mathcal N(U,v_1)) \dd y - \int_\R \sech'(y) v_2 \partial_y v_1 \dd y.
\end{split}\]
With $b,c\in L^\infty(\R)$, applying Young's inequality and $|\mathcal N(U,v_1)| \lesssim v_1$, we have
\[ \left|\frac d {dt} \|v(t)\|_{H^1_\omega\times L^2_\omega}^2 \right| \lesssim \int_\R \sech(y)[v_1^2 + v_2^2 + (\partial_y v_1)^2] \dd y \leq \|v_1\|_{H^1_\omega}^2 + \|v_2\|_{L^2_\omega}^2.\]
By Proposition \ref{p:integral-bound}, there is a sequence $t_n\to \infty$ such that $\|v(t_n)\|_{H^1_\omega\times L^2_\omega}^2  \to 0$. For any $t\geq 0$, we can integrate our bound for $|\frac d {dt} \|v(t)\|_{H^1_\omega\times L^2_\omega}^2 |$ from $t$ to $t_n$ to obtain
\[| \|v(t_n)\|_{H^1_\omega\times L^2_\omega}^2  -\|v(t)\|_{H^1_\omega\times L^2_\omega}^2 |  \lesssim \int_{t}^{t_n} (\|v_1(t')\|_{H^1_\omega}^2 + \|v_2(t')\|_{L^2_\omega}^2) \dd t'.  \]
Sending $n\to \infty$, we obtain
\[   \|v(t)\|_{H^1_\omega\times L^2_\omega}^2   \lesssim \int_{t}^{\infty} (\|v_1(t')\|_{H^1_\omega}^2 + \|v_2(t')\|_{L^2_\omega}^2) \dd t'.  \]
 Since $(\|v_1(t')\|_{H^1_\omega}^2 + \|v_2(t')\|_{L^2_\omega}^2)$ is integrable for large times thanks to Proposition \ref{p:integral-bound}, we must have $\|v(t)\|_{H^1_\omega\times L^2_\omega}^2  \to 0$ as $t\to \infty$, which implies the statement of the theorem.
\end{proof}
Combining Theorem \ref{t:no-odd-breathers} with the orbital stability result of Proposition \ref{p:orbital-constant} implies Theorem \ref{t:asymptotic}.

Next, we switch to the setting of Theorem \ref{t:asymptotic-odd}, i.e. $\xi \neq 0$, $F(u) = 1-\cos u$, and $U$ the stationary solution guaranteed by Theorem \ref{t:near-constant}. We assume the initial data are odd in $y$, $b$ is odd, and $c$ is even.

\begin{proposition}\label{p:integral-bound-odd}
Let $F$, $b$, and $c$ be as in Theorem \ref{t:asymptotic-odd}. For any odd (in $y$) global solution $u(t,y)$ of \eqref{e:main-y} with 
\[ \|u(t)-U\|_{H^1(\R)}^2 + \|\partial_t u(t)\|_{L^2(\R)}^2 \lesssim \eps^2, \quad t\in [0,\infty)\]
there holds 
\[ \int_0^\infty (\|u(t) - U\|_{H^1_\omega}^2 + \|\partial_t u(t)\|_{L^2_\omega}^2) \dd t \lesssim \eps^2.\]
\end{proposition}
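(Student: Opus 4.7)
The plan is to adapt the proof of Proposition~\ref{p:integral-bound} to the setting $U = \xi + U_\delta$ of Theorem~\ref{t:near-constant}, exploiting the odd parity of $v = u - U$ in three crucial ways. First, under the parity hypotheses ($b$ odd, $c$ even, and therefore $U$ odd by Theorem~\ref{t:near-constant}), oddness of $(v_1, v_2)$ is preserved by the flow \eqref{e:vector}: one checks directly that $\mathcal N(U, v_1) = \sin U - \sin(U + v_1)$ is odd whenever $U, v_1$ are. Second, $w := \zeta v_1 = \sech(y/\lambda)\, v_1$ is therefore odd with $w(0) = 0$, enabling a Hardy-type inequality below. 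Third, the stronger odd coercivity \eqref{e:odd-coercivity} can replace Lemma~\ref{l:B} at no cost.

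Starting from the identity \eqref{e:ddtI} and integrating by parts exactly as in the proof of Proposition~\ref{p:integral-bound}, the $b$-$c$ contributions to $-\frac{d}{dt}\mathcal I(v)$ collapse to
\[
 -\int_\R \psi b (\partial_y v_1)^2 \dd y + \tfrac 1 4 \int_\R (2\psi c' + (\psi' b)') v_1^2 \dd y.
\]
The sign assumptions \eqref{e:sign-condition}, combined with $b$ odd and $c$ even, make all three pieces pointwise non-negative: $\psi b \leq 0$ since $\sgn(\psi) = \sgn(y)$ and $\sgn(b) = -\sgn(y)$; $\psi c' \geq 0$ since $c'$ is odd with $\sgn(c') = \sgn(y)$; and $(\psi' b)' = \psi'' b + \psi' b' \geq 0$ because $\psi'' b \geq 0$ by the same sign computation, while $b'$ is \emph{even} (derivative of odd) and non-negative on $[0, \infty)$, hence non-negative on all of $\R$. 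Thus the entire $b$-$c$ contribution to $-\frac{d}{dt}\mathcal I(v)$ is non-negative. The nonlinear term is handled by Case~2 of Lemma~\ref{l:Fterm}, which applies because $F(u) = 1 - \cos u$ and because $U$ has the exponential decay $|U - \xi| + |U'| \lesssim \delta e^{-(1-\delta)|y|}$ (from the hypothesis $|c(y)| \lesssim e^{-|y|}$ and the second half of Theorem~\ref{t:near-constant}); it contributes an error $\lesssim \lambda^2 (\eps \|\partial_y w\|_{L^2}^2 + \delta \|v_1\|_{H^1_\omega}^2)$.

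Combining these with the odd coercivity $\mathcal B(v_1) \geq \tfrac{3}{4}\|\partial_y w\|_{L^2}^2$ yields
\[
 -\frac{d}{dt}\mathcal I(v) \geq \bigl(\tfrac 3 4 - C \lambda^2 \eps\bigr)\|\partial_y w\|_{L^2}^2 - C \lambda^2 \delta \|v_1\|_{H^1_\omega}^2.
\]
The key remaining ingredient is a Hardy-type estimate: for odd $w \in H^1$ with $w(0) = 0$ and $\lambda > 2$,
\[
 \|v_1\|_{H^1_\omega}^2 \lesssim \|\partial_y w\|_{L^2}^2.
\]
This follows by writing $\sech(y) v_1^2 = \sech(y) \cosh^2(y/\lambda) w^2 \lesssim e^{-(1 - 2/\lambda)|y|} w^2$, representing $w(y) = \int_0^y \partial_y w$ for $y > 0$, and using Cauchy--Schwarz and Fubini to bound $\int e^{-c|y|} w^2 \dd y$ by $\|\partial_y w\|_{L^2}^2$, with the analogous estimate for the $(\partial_y v_1)^2$ piece. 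For $\eps, \delta$ small enough, one concludes $-\frac{d}{dt}\mathcal I(v) \geq C_1 \|v_1\|_{H^1_\omega}^2$; integrating in time against the bound $|\mathcal I(v)| \lesssim \|v_1\|_{H^1} \|v_2\|_{L^2} \lesssim \eps^2$ gives $\int_0^\infty \|v_1\|_{H^1_\omega}^2 \dd t \lesssim \eps^2$. The $v_2$ bound is then obtained using the secondary functional $\int_\R \sech(y) v_1 v_2 \dd y$ exactly as in the proof of Proposition~\ref{p:integral-bound}, noting that $|v_1 \mathcal N(U, v_1)| \lesssim v_1^2$ since $|\mathcal N(U, v_1)| \leq |v_1|$.

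The main obstacle will be the non-negativity of the $(\psi' b)'$ term, which requires \emph{both} the sign condition $\sgn(y) b'(y) \geq 0$ \emph{and} the parity assumption forcing $b'$ to be even, in order to upgrade non-negativity on $[0, \infty)$ to non-negativity on all of $\R$; without the parity this piece would not contribute favorably. A secondary subtlety is that the Case~2 estimate in Lemma~\ref{l:Fterm} requires the improved exponential decay of $U - \xi$, which is precisely why the hypothesis $|c(y)| \lesssim e^{-|y|}$ appears in the statement of Theorem~\ref{t:asymptotic-odd}.
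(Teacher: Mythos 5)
Your proposal is correct and follows essentially the same route as the paper's proof: odd coercivity \eqref{e:odd-coercivity} in place of Lemma \ref{l:B}, the sign conditions \eqref{e:sign-condition} together with the parity of $b$ and $c$ to make the $b$--$c$ terms favorable, Case 2 of Lemma \ref{l:Fterm} for the nonlinearity, and the secondary functional $\int_\R \sech(y)v_1v_2\dd y$ to recover the $v_2$ bound. The only (harmless) deviation is that you prove the comparison $\|v_1\|_{H^1_\omega}^2 \lesssim \|\partial_y w\|_{L^2}^2$ directly via a Hardy-type argument using $w(0)=0$, whereas the paper quotes it from \cite[Formula (2.20)]{KMM2017breathers} with $\lambda=100$; your explicit discussion of why $(\psi'b)'\geq 0$ needs $b$ odd is a point the paper leaves implicit but is consistent with its standing parity assumptions.
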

\begin{proof}
The proof is very similar to Proposition \ref{p:integral-bound}. We choose the value $\lambda = 100$, which is not essential, but allows us to borrow some calcuations from \cite{KMM2017breathers}. Instead of \eqref{e:stability1}, we have, using \eqref{e:odd-coercivity} (which, as mentioned above, comes from \cite[Lemma 2.1]{KMM2017breathers}),
\begin{equation}\label{e:stability2}
\begin{split}
 - \frac d {dt} \mathcal I(v) &\geq 
 \frac 3 4 \int_\R  (\partial_y w)^2 \dd y 
 - \eps\|\partial_y w\|_{L^2}^2 - \delta \|v_1\|_{H^1_\omega}^2\\
 & \quad - \int_\R \left(\psi b (\partial_y v_1)^2 + \frac 1 2 \psi' c v_1^2 \right)\dd y - \int_\R \left( \frac 1 2 \psi' b + \psi c\right) (\partial_y v_1) v_1 \dd y.
\end{split}
\end{equation}
We also quote from \cite[Formula (2.20)]{KMM2017breathers} the estimate
\[  \|\partial_y w\|_{L^2} \gtrsim \|v_1\|_{H^1_\omega},  \]
which holds for $\lambda = 100$. With $\eps$ and $\delta$ small enough, we have
\[ -\frac d {dt} \mathcal I(v) \geq  C \|v_1\|_{H^1_\omega}^2    -\int_\R \left( \psi b (\partial_y v_1)^2 - \frac 1 4 (2 \psi c'  + (\psi' b)') v_1^2 \right) \dd y\geq \frac 1 2 \|v_1\|_{H^1_\omega},\]
by the sign conditions \eqref{e:sign-condition} on $b$ and $c$. The remainder of the proof proceeds in the same manner as Proposition \ref{p:integral-bound}.
\end{proof}

We now obtain a corresponding result ruling out small, odd breathers. The proof is essentially identical to Theorem \ref{t:no-breathers}.

\begin{theorem}\label{t:no-odd-breathers}
With $F$ as in Theorem \ref{t:asymptotic}, for any odd (in $y$) global solution $u(t,y)$ of \eqref{e:main-y} with 
\[ \|u(t,\cdot)-U\|_{H^1(\R)}^2 + \|\partial_t u(t,\cdot)\|_{L^2(\R)}^2 \lesssim \eps^2, \quad t\in [0,\infty)\]
there holds 
\[ \lim_{t\to \infty} \left(\|u(t)-U\|_{H^1(I)} + \|\partial_t u(t)\|_{L^2(I)}\right) = 0,\]
for any bounded interval $I\subset \R$.
\end{theorem}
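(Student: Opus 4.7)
The plan is to follow the argument of Theorem \ref{t:no-breathers} essentially verbatim, with $v = u - U$ in place of $u$, and to substitute Proposition \ref{p:integral-bound-odd} for Proposition \ref{p:integral-bound}. Because $v$ is a genuine perturbation of a near-constant state rather than of the vacuum, the only real change is that the nonlinear term in the evolution equation for $v$ is now $\mathcal N(U,v_1) = F'(U) - F'(U+v_1)$; with $F(u)=1-\cos u$ one has $|\mathcal N(U,v_1)| = |\sin U - \sin(U+v_1)| \leq |v_1|$, which is the same linear-in-$v_1$ bound used in the proof of Theorem \ref{t:no-breathers}.

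Concretely, I would introduce the weighted quantity
\[ \|v(t)\|_{H^1_\omega\times L^2_\omega}^2 := \int_\R \sech(y)\bigl[(\partial_y v_1)^2 + v_1^2 + v_2^2\bigr]\dd y, \]
with $(v_1,v_2) = (u - U, \partial_t u)$, which satisfies the system \eqref{e:vector} (with $\mathcal N(U,v_1)$ now nontrivial). Differentiating in time and using the equation for $\partial_t v_2$, then integrating by parts to move a derivative off of $v_2 \partial_y^2 v_1$, the calculation in the proof of Theorem \ref{t:no-breathers} produces
\[ \frac d {dt} \|v(t)\|_{H^1_\omega\times L^2_\omega}^2 = 2\int_\R \sech(y) v_2 \bigl[(1+c)v_1 + b \partial_y v_1 + \mathcal N(U,v_1)\bigr] \dd y - \int_\R \sech'(y) v_2 \partial_y v_1 \dd y. \]
Using $b,c \in L^\infty$, $|\sech'(y)|\lesssim \sech(y)$, $|\mathcal N(U,v_1)| \lesssim |v_1|$, and Young's inequality, I would derive
\[ \Bigl|\tfrac d {dt} \|v(t)\|_{H^1_\omega\times L^2_\omega}^2 \Bigr| \lesssim \|v_1(t)\|_{H^1_\omega}^2 + \|v_2(t)\|_{L^2_\omega}^2. \]

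The integrated estimate of Proposition \ref{p:integral-bound-odd} guarantees that $\int_0^\infty (\|v_1\|_{H^1_\omega}^2 + \|v_2\|_{L^2_\omega}^2)\dd t \lesssim \eps^2 < \infty$, so along some sequence $t_n \to \infty$ we have $\|v(t_n)\|_{H^1_\omega\times L^2_\omega} \to 0$. Integrating the above differential inequality from an arbitrary $t \geq 0$ to $t_n$ and then sending $n\to\infty$ yields
\[ \|v(t)\|_{H^1_\omega\times L^2_\omega}^2 \lesssim \int_t^\infty (\|v_1(t')\|_{H^1_\omega}^2 + \|v_2(t')\|_{L^2_\omega}^2) \dd t', \]
whose right-hand side tends to $0$ as $t\to\infty$ by the integrability. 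Since $\sech(y)$ is bounded below on any bounded interval $I$, this gives the desired localized decay of $\|u(t)-U\|_{H^1(I)} + \|\partial_t u(t)\|_{L^2(I)}$.

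I do not expect any serious obstacle: the coercivity and Virial machinery has already been carried out in Proposition \ref{p:integral-bound-odd}, and the remaining differential-inequality/sequence argument is transparent. The only point worth checking carefully is that the preservation-of-oddness hypothesis is consistent with the evolution (so that the integrated bound applies at every $t$), and that the inhomogeneity from $U$ does not spoil the linear-in-$v_1$ control on $\mathcal N(U,v_1)$; both are automatic for $F(u)=1-\cos u$ with $U$ odd and $b$ odd, $c$ even.
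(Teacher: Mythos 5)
Your proposal is correct and follows the same route as the paper, which simply observes that the proof is essentially identical to that of Theorem \ref{t:no-breathers} with Proposition \ref{p:integral-bound-odd} in place of Proposition \ref{p:integral-bound}. Your extra checks (the bound $|\mathcal N(U,v_1)|\lesssim |v_1|$ for $F(u)=1-\cos u$ and the consistency of the oddness assumption) are exactly the points one would verify when carrying the argument over, and they pose no difficulty.
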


Theorem \ref{t:asymptotic-odd} now follows from Proposition \ref{p:orbital-constant} and Theorem \ref{t:no-odd-breathers}.

\bibliographystyle{abbrv}
\bibliography{sineGordon}

\end{document}